\newtheorem{theorem}{Theorem}[section]
\newtheorem{corollary}[theorem]{Corollary}
\newtheorem*{observation}{Observation}
\newtheorem{Lemma}[theorem]{Lemma}
\theoremstyle{definition}
\newtheorem*{example*}{Example}
\newcolumntype{C}{>{$}c<{$}}
\newcolumntype{M}{>{$}p{10cm}<{$}}
\begin{document}

\author{Danny Dyer\\
	Department of Mathematics and Statistics\\
	St. John's Campus, Memorial University of Newfoundland\\
	St. John's, Newfoundland\\
	Canada\\
	{\tt dyer@mun.ca}
	\and
	Jared Howell\\
	School of Science and the Environment\\
	Grenfell Campus, Memorial University of Newfoundland\\
	Corner Brook, Newfoundland\\
	Canada\\
	{\tt jahowell@grenfell.mun.ca}
	\and
	Brittany Pittman\\
	Department of Mathematics and Statistics\\
	St. John's Campus, Memorial University of Newfoundland\\
	St. John's, Newfoundland\\
	Canada\\
	{\tt bep275@mun.ca}}

\title{The Watchman's Walk Problem on Directed Graphs}
\date{July 24, 2020}

\maketitle

\begin{abstract}
In a graph, a watchman's walk is a minimum closed dominating walk. Given a graph $G$ and a single watchman, the length of a watchman's walk in $G$ (the watchman number) is denoted by $w(G)$ and the typical goals of the watchman's walk problem is to determine $w(G)$ and find a watchman's walk in $G$. In this paper, we extend the watchman's walk problem to directed graphs. In a directed graph, we say that the watchman can only move to and see the vertices that are adjacent to him relative to outgoing arcs. That is, a watchman's walk is oriented and domination occurs in the direction of the arcs. The directed graphs this paper focuses on are families of tournaments and orientations of complete multipartite graphs. We give bounds on the watchman number and discuss its relationship to variants of the domination number.
\end{abstract}

\section{Introduction}

\tikzset{
  on each segment/.style={
    decorate,
    decoration={
      show path construction,
      moveto code={},
      lineto code={
        \path [#1]
        (\tikzinputsegmentfirst) -- (\tikzinputsegmentlast);
      },
      curveto code={
        \path [#1] (\tikzinputsegmentfirst)
        .. controls
        (\tikzinputsegmentsupporta) and (\tikzinputsegmentsupportb)
        ..
        (\tikzinputsegmentlast);
      },
      closepath code={
        \path [#1]
        (\tikzinputsegmentfirst) -- (\tikzinputsegmentlast);
      },
    },
  },
}

Given a graph $G$ and a single watchman, the watchman's walk problem looks at the scenario of that watchman traversing the graph in such a way that the route is a minimum closed dominating walk. Any such walk in $G$ is a \textit{watchman's walk} of $G$. The length of a watchman's walk in $G$ is denoted by $w(G)$; this is called the \textit{watchman number} of $G$. The typical goals of the watchman's walk problem is to determine $w(G)$ and find a closed dominating walk in $G$ that achieves this number. The watchman's walk was introduced by Hartnell, Rall, and Whitehead in 1998 in \cite{intro}.

Throughout this paper, we follow \cite{west}, with the exception of our use of $\deg(v)$ for the degree of a vertex $v$; basic definitions and notation can be found there.

The watchman's walk problem is a variation of the domination problem. Domination in graphs was formally introduced in 1958 by Berge in \cite{berge}. The domination problem in graphs aims to find a minimum dominating set; a set of vertices in a graph such that every vertex of the graph is either in that set or a neighbour of a vertex in that set. Variations of this problem include finding a minimum total dominating set, or a minimum connected dominating set. Directed domination in tournaments was first considered by Erd{\"o}s in \cite{erdos}. In \cite{irredundance}, Reid et al. consider the domination number and irredundance number of tournaments. They give upper bounds on the domination number of tournaments of small order. A survey of results on the domination number of graphs can be found in \cite{domfundamentals}.

If a dominating set induces a weakly connected subdigraph, we call it a \textit{weakly connected dominating set}. The size of a minimum weakly connected dominating set is denoted by $\gamma_{wc}(D)$. If a dominating set induces a strongly connected subdigraph, we call it a \textit{strongly connected dominating set}. The minimum size of such a set is denoted by $\gamma_{sc}(D)$. We call a minimum weakly connected dominating set a \textit{$\gamma_{wc}$-set}, and a minimum strongly connected dominating set a \textit{$\gamma_{sc}$-set}.

Finding a connected dominating set in an undirected graph, and a weakly or strongly connected dominating set in a directed graph is a problem similar to finding a watchman's walk. The vertices of a minimum closed dominating walk in an undirected graph are always a connected dominating set. In a directed graph, the vertices of a watchman's walk will be a strongly connected dominating set. However, in many directed graphs, these parameters and sets of vertices are not equal. In fact, a digraph may not contain any watchman's walks. If we consider the directed path in Figure \ref{directedpath}, the set of vertices $\{v_1,v_2,v_3,v_4,v_5\}$ is a minimum weakly connected dominating set, and $\gamma_{wc}(D)=5$. However, there are no strongly connected dominating sets or watchman's walks.

\begin{figure}[h]
    \begin{center}\vspace*{1.2cm}
   \begin{tikzpicture} [transform canvas={scale=.7,xshift=-9cm, yshift=.7cm}]

   \foreach \x in {1,2,3,4,5}
   	{\pgfmathparse{int(\x*3.1)}
    	\node[circle,draw, fill=black] (v\x) at (\pgfmathresult,0) {};
   	\node (l\x) at (\pgfmathresult,-.5) {\Large $v_{\x}$};}

   \foreach \x/\y/\a in {1/2/.6,2/3/.6,3/4/.6,4/5/.6}
   \draw[thick, decoration={markings,mark=at position \a with
   	{\arrow[scale=3,>=stealth]{>}}},postaction={decorate}] (v\x) to (v\y);

   \end{tikzpicture}
   \end{center}
    \caption{A digraph with no watchmans walks.}
    \label{directedpath}
\end{figure}
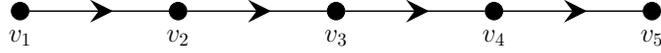

In directed graphs that have strongly connected dominating sets and watchman's walks, $\gamma_{sc}(D)$ and $w(D)$ are not necessarily equal. Consider the directed graph $D$ in Figure \ref{windmill}. In any $\gamma_{sc}$-set, each vertex in the set is dominated by another vertex in the set. Hence, each vertex $v$ in a $\gamma_{sc}$-set dominates at most $|N^+(v)|$ other vertices. It follows that a strongly connected subset of $D$ containing six vertices dominates at most fifteen vertices where $v_7$ must be in any strongly connected dominating set and $u_7$ must not, due to their number of out-neighbours. It can be checked that no dominating set of size six induces a strongly connected subset. Thus, $\gamma_{sc}(D)=7$, and the set $S=\{v_1,v_2, \ldots, v_7\}$ is a $\gamma_{sc}$-set. The shortest closed walk containing all of these vertices has length $9$. The walk of length eight illustrated by dashed arcs is a closed dominating walk in the graph, and hence $w(D)\leq 8$. Thus, no watchman's walk in this graph contains the $\gamma_{sc}$-set $S$ as a subset of its vertices and, in fact, the watchman's walk illustrated by the dashed arcs is completely disjoint from the $\gamma_{sc}$-set $S$. Additionally, $\{v_7, u_1, u_3, u_5\}$ is a $\gamma$-set, so $\gamma(G)=4$.

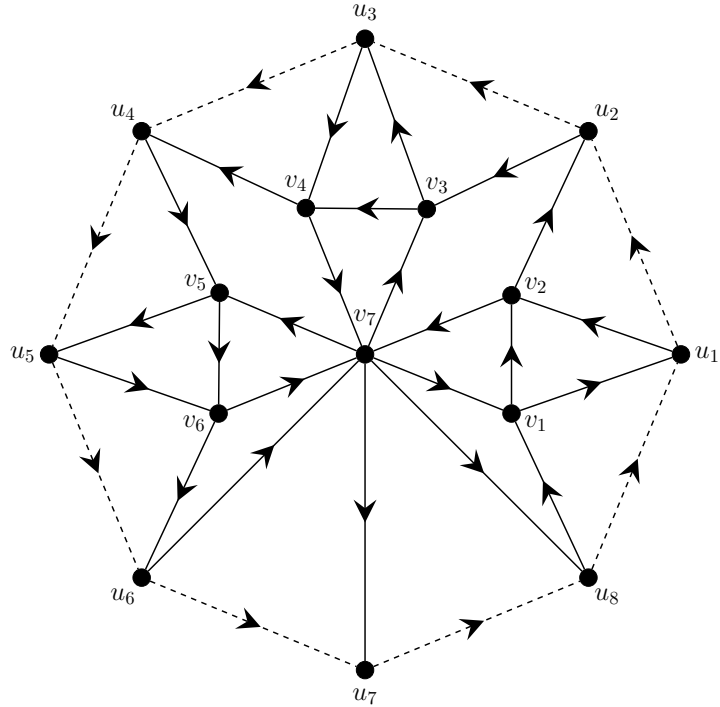
\begin{figure}[htb]
    \centering\vspace*{9.5cm}
    \begin{tikzpicture} [transform canvas={scale=.7,yshift=6.7cm}]

    \foreach \x in {1,2,3,4,5,6}
    {\pgfmathparse{int(\x*360/8-67.5)}
    	\node[circle,draw, fill=black] (v\x) at (\pgfmathresult:3) {};
    	\node (l\x) at (\pgfmathresult:3.5) {\Large $v_{\x}$};}
    \node[circle,draw, fill=black] (v7) at (0,0) {};
    \node (l7) at (0,.7) {\Large $v_7$};
    \foreach \x in {1,2,3,4,5,6,7,8}
    {\pgfmathparse{int(\x*360/8-45)}
    	\node[circle,draw, fill=black] (u\x) at (\pgfmathresult:6) {};
    	\node (k\x) at (\pgfmathresult:6.5) {\Large $u_{\x}$};}
    \foreach \x/\y/\a in {1/2/.535,2/3/.535,3/4/.535,4/5/.535,5/6/.535,6/7/.535,7/8/.535,8/1/.535}
    \draw[thick, dashed, decoration={markings,mark=at position \a with
    	{\arrow[scale=3,>=stealth]{>}}},postaction={decorate}] (u\x) to (u\y);

    \foreach \x/\y/\a in {7/1/.6,1/2/.6,2/7/.6,7/3/.6,3/4/.6,4/7/.6,7/5/.6,5/6/.6,6/7/.6}
    \draw[thick, decoration={markings,mark=at position \a with
    	{\arrow[scale=3,>=stealth]{>}}},postaction={decorate}] (v\x) to (v\y);

    \foreach \x/\y/\a in {1/1/.535,2/2/.535,3/3/.535,4/4/.535,5/5/.535,6/6/.535,7/7/.535,7/8/.535}
    \draw[thick, decoration={markings,mark=at position \a with
    	{\arrow[scale=3,>=stealth]{>}}},postaction={decorate}] (v\x) to (u\y);

    \foreach \x/\y/\a in {1/2/.6,2/3/.6,3/4/.6,4/5/.6,5/6/.6,6/7/.6,8/1/.6}
    \draw[thick, decoration={markings,mark=at position \a with
    	{\arrow[scale=3,>=stealth]{>}}},postaction={decorate}] (u\x) to (v\y);
    \end{tikzpicture}
    \caption{A digraph with disjoint $\gamma_{sc}$-set and watchman's walk}
    \label{windmill}
\end{figure}

A directed graph can be formed from an undirected simple graph $G$ by assigning a direction to each edge in $G$. The resulting graph is called an \textit{orientation} of $G$. A \textit{tournament} is an orientation of a complete graph. A \textit{strongly connected component} of a digraph $D$ is a subdigraph of $D$ that is also strongly connected. The \textit{condensation} of a digraph $D$, denoted by $D^*$, is the digraph found by contracting each maximal strongly connected component to a single vertex. In this digraph, there is an arc from vertex $W$ to vertex $U$ if all arcs between strongly connected components $W$ and $U$ in $T$ are directed from a vertex in $W$ to a vertex in $U$.

The results in this paper originally appeared as part of \cite{britthesis}.

\section{Watching directed graphs}

Unlike an undirected graph, a digraph may not have a watchman's walk. Thus, we begin with results on the existence of watchman's walks in digraphs, before moving on to families where we can guarantee a watchman's walk exist.
 
\subsection{Watchman's walks in general digraphs}

 In the following theorem, we generalize the result in Corollary \ref{condww} for tournaments to general digraphs.

\begin{theorem} \label{strongdigww}
If $D$ is a digraph with a strongly connected subdigraph $D'$ such that the vertices of $D'$ are a dominating set in $D$, then $D$ has a watchman's walk.
\end{theorem}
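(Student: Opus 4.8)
The plan is to turn the strongly connected dominating subdigraph $D'$ into a closed walk that visits every vertex of $D'$, and then argue that such a walk is automatically dominating and closed in $D$. Since $D'$ is strongly connected, for any ordering $x_1, x_2, \ldots, x_k$ of its vertices there is a directed path in $D'$ from $x_i$ to $x_{i+1}$ for each $i$ (indices mod $k$); concatenating these paths $x_1 \rightsquigarrow x_2 \rightsquigarrow \cdots \rightsquigarrow x_k \rightsquigarrow x_1$ produces a closed directed walk $W$ in $D'$ (hence in $D$) that contains every vertex of $D'$. Because every vertex of $D'$ lies on $W$ and $V(D')$ dominates $D$, every vertex of $D$ is either on $W$ or is an out-neighbour of some vertex of $D'$ that lies on $W$; thus $W$ is a closed dominating walk, so it is a candidate for a watchman's walk.

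The only remaining subtlety is the definition of a watchman's walk as a \emph{minimum} closed dominating walk: I must make sure the set of closed dominating walks in $D$ is nonempty and that a shortest one exists. Nonemptiness is exactly what the previous paragraph establishes. Existence of a minimum then follows because walk lengths are nonnegative integers, so the nonempty set of lengths of closed dominating walks has a least element; any closed dominating walk achieving that least length is a watchman's walk. Hence $D$ has a watchman's walk, and in fact $w(D) \le |V(W)| \le$ (the length of the concatenated path walk above).

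I expect the construction itself to be entirely routine; the only thing to be careful about is bookkeeping — confirming that concatenating directed paths in a digraph indeed yields a legitimate directed walk (repeated vertices and arcs are allowed in a walk, so there is no obstruction), and that "closed" is preserved since we return to the starting vertex $x_1$. If the paper's convention requires a watchman's walk to have length at least $1$ (i.e. to be a nontrivial walk), one should also note that $D'$ being a dominating set of a digraph forces either $|V(D')| \ge 2$ or $D$ itself to be a single vertex, and in the degenerate one-vertex case the statement is trivially true or vacuous depending on convention; this edge case is the only place where a definitional check is needed.

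Finally, it is worth remarking that this argument is exactly the directed analogue of the classical fact that a connected dominating set in an undirected graph yields a closed dominating walk (spanning-tree doubling), with "strong connectivity" playing the role that "connectivity" plays in the undirected setting; the strong-connectivity hypothesis is precisely what is needed so that the required directed paths between consecutive vertices exist. This should make the write-up short, and I would present it essentially as the two displayed steps above: (1) build $W$ from $D'$, (2) observe $W$ is closed and dominating, hence a watchman's walk exists.
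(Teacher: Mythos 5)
Your proposal is correct and follows essentially the same route as the paper's proof: use strong connectivity of $D'$ to concatenate directed paths through all its vertices into a closed walk, observe that it dominates $D$ because $V(D')$ does, and conclude that a minimum closed dominating walk exists since the set of such walks is nonempty. Your extra remarks on the well-ordering of walk lengths and the degenerate one-vertex case are harmless elaborations of what the paper leaves implicit.
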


\begin{proof}
Let $D'$ be a strongly connected subdigraph of $D$ with its vertices forming a dominating set in $D$. So there exists a directed path between any pair of vertices in $D'$. This implies there exists a closed walk $W$ that passes through every vertex in $D'$, possibly repeating some vertices. Clearly $W$ is a dominating walk in $D'$ and since the vertices of $D'$ dominate $D$, $W$ is also a dominating walk in $D$. $W$ is not necessarily of minimal length but is a closed dominating walk, and therefore $D$ has a minimum closed dominating walk.
\end{proof}

We now know that any strongly connected digraph has a watchman's walk, regardless of its structure. For the remainder of this section, we consider digraphs that are not necessarily strongly connected. Recall that a source vertex in a digraph is a vertex with no in-neighbours. Any nontrivial digraph with a source vertex is not strongly connected, as there is no nontrivial walk that ends at a source vertex. It is easy to see that a digraph that has exactly one source vertex has a watchman's walk if and only if that source vertex dominates all the other vertices. Also, if a digraph has more than one source vertex it cannot have a watchman's walk. Thus, we get the following theorem.

\begin{theorem}
A digraph $D$ of order $n$ with $k\geq 1$ source vertices has a watchman's walk if and only if $D$ has exactly one source vertex $v$, where $\deg^+(v)=n-1$.
\end{theorem}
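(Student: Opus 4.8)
The plan is to prove the two directions of the biconditional separately, reusing the elementary observations already recorded in the paragraph preceding the statement. Throughout, recall that a source vertex (no in-neighbours) cannot be the terminal vertex of any nontrivial walk, so no closed walk of positive length can pass through a source vertex, and a closed walk of length $0$ dominates only a single vertex.

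For the forward direction, suppose $D$ has $k\geq 1$ source vertices and a watchman's walk $W$. If $k\geq 2$, pick two distinct source vertices $u$ and $v$. A dominating walk must dominate both $u$ and $v$; since neither $u$ nor $v$ has an in-neighbour, the only vertex that dominates $u$ is $u$ itself, and similarly for $v$. Hence $W$ must include both $u$ and $v$ among its vertices. But a closed walk that visits two distinct vertices has positive length, and as observed it cannot revisit — indeed cannot visit at all — a source vertex in a nontrivial way; more precisely, to return to $u$ the walk would need an arc into $u$, contradicting that $u$ is a source. This rules out $k\geq 2$, so $k=1$; call the unique source vertex $v$. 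The same reasoning shows $v$ must be a vertex of $W$, and $W$ cannot have positive length (else it would need an in-arc to $v$), so $W$ is the trivial walk at $v$. A trivial walk at $v$ dominates exactly $\{v\}\cup N^+(v)$, and for this to be all of $V(D)$ we need $\deg^+(v)=n-1$.

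For the converse, suppose $D$ has exactly one source vertex $v$ with $\deg^+(v)=n-1$. Then the trivial (length-zero) closed walk consisting of $v$ alone is a closed dominating walk, since $v$ dominates every other vertex. Hence $D$ has at least one closed dominating walk; taking one of minimum length gives a watchman's walk, so $w(D)=0$ and $D$ has a watchman's walk.

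The argument is essentially a careful bookkeeping of the defining properties of source vertices, so there is no real technical obstacle; the one point requiring a little care is making explicit that any closed walk of positive length is barred from containing a source vertex at all (not merely from ending there), which is what forces the watchman's walk to be trivial in the $k=1$ case. I would state that as a small preliminary observation and then the two directions follow immediately.
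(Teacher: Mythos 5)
Your proof is correct and follows essentially the same route as the paper, which states this theorem as an immediate consequence of the informal observations in the preceding paragraph (a source vertex is dominated only by itself, so it must lie on any closed dominating walk, yet no closed walk of positive length can enter it, forcing the walk to be the trivial walk at a unique source that dominates everything). Your write-up simply makes those observations explicit; no substantive difference in approach.
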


If we take an orientation of any path on at least four vertices, we get a digraph that is not strongly connected and, in fact, no such orientation has a watchman's walk. However, there does exist an orientation of a $n$-cycle will have a watchman's walk; one in which all the arcs of are oriented as a directed $n$-cycle. Thus there are only two such orientations.

\subsection{Orientations of complete multipartite graphs}

In this section, we will consider the existence of watchman's walks in orientations of complete bipartite and multipartite graphs. Unlike in tournaments, these do not always have a watchman's walk. Consider, for example, a complete multipartite graph with more than one source vertex. In the following observation, we consider the existence of watchman's walks in multipartite graphs with at least one source vertex.

\begin{observation}
Let $D=(X_1\cup \ldots \cup X_k, A)$ be an orientation of a complete multipartite graph for $k\geq 2$, where $X_1,\ldots, X_k$ are the vertex sets of the partition. If $v\in X_i$ is a source vertex for any $1\leq i \leq k$, then $D$ has a watchman's walk if and only if $|X_i|=1$.
\end{observation}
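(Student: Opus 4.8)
The plan is to argue both directions of the biconditional directly, using Theorem \ref{strongdigww} for the "if" direction and the source-vertex obstruction for the "only if" direction.

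\medskip

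\textbf{The "only if" direction.} Suppose $v \in X_i$ is a source vertex and $|X_i| \geq 2$, so there is another vertex $u \in X_i$. I claim $u$ is also a source vertex, which by the previous theorem (a digraph with more than one source vertex has no watchman's walk) finishes this direction. Indeed, since $v$ and $u$ lie in the same part, there is no arc between them in either direction. Every in-neighbour of $u$ would have to come from some part $X_j$ with $j \neq i$; but any vertex $x \in X_j$ is adjacent to $v$ as well, and since $v$ is a source, the arc between $x$ and $v$ is directed $v \to x$. This alone does not immediately force the arc between $x$ and $u$ to go $u \to x$ — that is the step that needs care. The honest resolution: the claim as I stated it is false in general (there can be many source vertices, not forced to be in one part), so instead I should argue that if $|X_i|\ge 2$ then $D$ has at least two source vertices only when... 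Actually the cleaner route is: if $v$ is a source and $|X_i|\ge 2$, it is still possible $v$ is the unique source; then by the previous theorem $D$ has a watchman's walk iff $\deg^+(v) = n-1$. But $v \in X_i$ is not adjacent to any other vertex of $X_i$, and $|X_i|\ge 2$ gives a non-neighbour of $v$, so $\deg^+(v) \le n-2 < n-1$, hence no watchman's walk. If $v$ is not the unique source, then $D$ has $\ge 2$ sources and again no watchman's walk. Either way, $|X_i|\ge 2$ with a source in $X_i$ forbids a watchman's walk.

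\medskip

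\textbf{The "if" direction.} Suppose $v \in X_i$ is a source vertex and $|X_i| = 1$, so $X_i = \{v\}$ and $v$ is adjacent to every other vertex of $D$; since $v$ is a source, every such arc is directed out of $v$, giving $\deg^+(v) = n - 1$. Thus $\{v\}$ is a (trivially strongly connected) dominating subdigraph of $D$, and Theorem \ref{strongdigww} immediately yields a watchman's walk (a trivial one: the watchman stays at $v$, or more precisely the closed walk of length $0$ at $v$ dominates $D$).

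\medskip

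\textbf{Expected main obstacle.} The delicate point is the "only if" direction: handling the case where the source vertex $v$ is the unique source while $|X_i| \geq 2$. Here I cannot invoke the multiple-source theorem, and must instead observe that $v$ failing to dominate its own part means $\deg^+(v) \leq n-2$, so the single-source characterization theorem applies and rules out a watchman's walk. Organizing the case split cleanly — (i) $v$ not the unique source, handled by the multiple-sources theorem; (ii) $v$ the unique source, handled by the degree condition — is the crux, but once set up it is routine.
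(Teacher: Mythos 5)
Your final argument is correct and is essentially the one the paper intends: the observation is stated without proof as a direct consequence of the preceding theorem characterizing digraphs with source vertices, and your case split (multiple sources versus a unique source $v$ with $\deg^+(v)\le n-2$ because $v$ has a non-neighbour in $X_i$) together with the $\deg^+(v)=n-1$ argument when $|X_i|=1$ is exactly that consequence. Your initial claim that every other vertex of $X_i$ must also be a source is indeed false in general (such a vertex can receive an arc from another part), and you were right to abandon it for the degree-based argument.
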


 We move on to orientations that have no source vertices. We begin by considering orientations of complete bipartite graphs.

\begin{theorem} \label{bipwwthm}
Let $D$ be an orientation of a complete bipartite graph with partition $(A,B)$ such that $\delta^-(D)\geq 1$. If there exists a $U\subseteq B$ such that $D[A\cup U]$ is strongly connected, then $D$ has a watchman's walk.
\end{theorem}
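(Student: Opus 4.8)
The plan is to reduce this statement to Theorem~\ref{strongdigww}, which already guarantees a watchman's walk whenever $D$ contains a strongly connected subdigraph whose vertex set dominates $D$. So the whole task is to produce such a subdigraph from the hypotheses: $D$ is an orientation of a complete bipartite graph on parts $(A,B)$ with $\delta^-(D)\ge 1$ (no source vertices), and there is some $U\subseteq B$ with $D[A\cup U]$ strongly connected. The natural candidate is $D[A\cup U]$ itself, possibly after enlarging $U$, so the key step is to check domination: every vertex of $D$ must be dominated by a vertex of $A\cup U$ (or lie in $A\cup U$).

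First I would dispense with the vertices of $A$ and $U$: they are in the set, so they are trivially dominated, and I would note that since $D$ is bipartite there are no arcs within $A$ or within $B$, which is what forces any strongly connected subdigraph of $D$ that meets $A$ to meet $B$ as well. The real content is the vertices of $B\setminus U$. Take any $b\in B\setminus U$. Because $\delta^-(D)\ge1$, $b$ has an in-neighbour, and that in-neighbour must lie in $A$ (bipartiteness), say $a\in A$; then $a\to b$, so $b$ is dominated by $A\subseteq A\cup U$. That already handles every vertex of $B\setminus U$. Hence the vertex set $A\cup U$ dominates all of $D$: vertices in $A\cup U$ are in the set, and vertices in $B\setminus U$ are out-neighbours of vertices of $A$. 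Combined with the hypothesis that $D[A\cup U]$ is strongly connected, Theorem~\ref{strongdigww} applies and yields a watchman's walk.

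I would write this up as a short direct argument: let $D'=D[A\cup U]$; by hypothesis $D'$ is strongly connected; to see $V(D')$ is dominating, observe that any $b\in B\setminus U$ has, by $\delta^-(D)\ge1$, an in-neighbour, necessarily in $A$, so $b$ has an in-neighbour in $V(D')$; therefore $V(D')$ is a strongly connected dominating set and Theorem~\ref{strongdigww} gives the conclusion. It is worth remarking explicitly that the hypothesis $\delta^-(D)\ge1$ is doing exactly one job here — ruling out source vertices in $B\setminus U$, which could otherwise never be dominated — and that $A$ nonempty (implicit in "complete bipartite") guarantees the in-neighbour lands in $A$.

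The only subtlety, and the step I would be most careful about, is making sure nothing is hidden in the phrase "orientation of a complete bipartite graph": I should confirm that $A$ and $B$ are both nonempty (otherwise "strongly connected $D[A\cup U]$" with $U\subseteq B$ and the domination claim need a trivial-case check), and that the absence of intra-part arcs is being used correctly when I assert the in-neighbour of $b\in B$ lies in $A$. These are genuinely routine, so I do not expect a real obstacle; the proof is essentially a one-paragraph corollary of Theorem~\ref{strongdigww} once the domination observation is made. If one wanted a cleaner statement one could even note that $A\cup U$ need not be the whole of a $\gamma_{sc}$-set — the theorem only needs \emph{some} strongly connected dominating subdigraph, not a minimum one — but that is a remark rather than part of the proof.
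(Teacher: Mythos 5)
Your proposal is correct and follows essentially the same route as the paper's proof: observe that $\delta^-(D)\ge 1$ together with bipartiteness forces every vertex of $B$ to have an in-neighbour in $A$, so $A\cup U$ is a strongly connected dominating set, and then apply Theorem~\ref{strongdigww}. The extra care you take about nonemptiness and intra-part arcs is fine but not needed beyond what the paper's one-line argument already covers.
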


\begin{proof}
	First note that since $\delta^-(D)\geq 1$, $A$ dominates every vertex in $B$. This means that $D[A\cup U]$ is strongly connected and dominating. By Theorem~\ref{strongdigww}, $D$ has a watchman's walk.
\end{proof}

\begin{corollary} \label{bipww}
Let $D$ be an orientation of a complete bipartite graph with partition $(A,B)$ such that $\delta^-(D)\geq 1$ and $|A|\leq |B|$. If there is a set of vertices $U\subseteq B$ such that each vertex in $A$ is dominated by exactly one vertex in $U$, then $D$ has a watchman's walk. Moreover, $w(D)\leq 2|A|$.
\end{corollary}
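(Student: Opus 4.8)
The goal is to show that under the hypotheses — $\delta^-(D)\ge 1$, $|A|\le|B|$, and a set $U\subseteq B$ such that each vertex of $A$ has exactly one in-neighbour in $U$ — the digraph $D$ has a watchman's walk of length at most $2|A|$. The natural strategy is to construct an explicit closed dominating walk of length $2|A|$; since a closed dominating walk exists, $D$ automatically has a \emph{minimum} closed dominating walk, i.e.\ a watchman's walk, and the length bound follows. So I would not try to invoke Theorem~\ref{bipwwthm} directly (that only gives existence via strong connectivity of some $D[A\cup U]$, which we may not have); instead I would build the walk by hand and let it also certify existence.

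\medskip
\textbf{Step 1: reduce $U$ to a matching-like structure.} For each $a\in A$ let $u(a)$ be the unique vertex of $U$ dominating $a$. I would first argue we may assume $U=\{u(a):a\in A\}$, so $|U|\le|A|$. (Discarding vertices of $B$ that dominate no vertex of $A$ keeps the "exactly one" condition and only shrinks the walk we build.)

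\textbf{Step 2: build the walk.} Order $A=\{a_1,\dots,a_{|A|}\}$. The idea is to traverse, for $i=1,\dots,|A|$, the arc from $u(a_i)$ to $a_i$ and then an arc from $a_i$ to the \emph{next} vertex of $U$ we need to visit — i.e.\ from $a_i$ to some vertex of $B$ that lets us continue, ideally $u(a_{i+1})$ (indices mod $|A|$). The point is: every vertex $a_i\in A$ has out-degree $\ge 1$ into $B$ (here I use $\delta^-(D)\ge1$ only to dominate $B$, but I need $a_i$ to reach \emph{back} into $U$). This is the delicate point — see below. Granting it, the walk $u(a_1)\,a_1\,u(a_2)\,a_2\,\cdots\,u(a_{|A|})\,a_{|A|}\,u(a_1)$ is closed, has length exactly $2|A|$, visits every vertex of $A$ and every vertex of $U$.

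\textbf{Step 3: domination.} The vertices on the walk are $A\cup U$. Since $\delta^-(D)\ge1$, every vertex of $B$ has an in-neighbour, which must lie in $A$; hence $A$ dominates $B$, so $A\cup U$ dominates all of $B$. Every vertex of $U\subseteq B$ is dominated by its corresponding $a$ via the reverse... wait — careful: $u(a)\to a$, so $a$ does not dominate $u(a)$; rather $u(a)$ is dominated by whatever dominates it in $A$, which again exists by $\delta^-(D)\ge1$. And $A$ is dominated because each $a_i$ is dominated by $u(a_i)\in U$, which is on the walk. So every vertex of $D$ is on the walk or dominated by a vertex on the walk.

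\medskip
\textbf{The main obstacle.} Step~2 needs, for each $a_i$, an out-arc from $a_i$ landing on the vertex of $U$ we want to visit next; but $a_i$'s out-neighbours in $B$ need not include $u(a_{i+1})$, and might not even lie in $U$. The fix: instead of insisting on landing on $u(a_{i+1})$ directly, route $a_i \to b \to a_{i+1}$ where $b$ is \emph{any} out-neighbour of $a_i$ in $B$ (one exists since $a_i$ has some out-arc, else... actually even that needs argument — but note $a_i\in A$ and $D$ is an orientation of a \emph{complete} bipartite graph, so every arc between $a_i$ and $B$ exists in one direction; if $a_i$ had no out-neighbour in $B$ then every vertex of $B$ dominates $a_i$, contradicting that $u(a_i)$ is the \emph{unique} one — provided $|B|\ge2$, which holds since $|B|\ge|A|\ge 1$ and if $|A|=|B|=1$ the claim is trivial). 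Then replace $b$ by $u(a_{i+1})$'s role: since each vertex of $A$ is dominated by a vertex of $U$, traverse $u(a_{i+1})\to a_{i+1}$. So the walk becomes $\cdots a_i \, b_i \, a_{i+1} \cdots$ of the same length $2|A|$ if we can always choose $b_i=u(a_{i+1})$, and otherwise we must check the length does not exceed $2|A|$ — here I would use $|A|\le|B|$ to show the $u(a_i)$ are distinct and that the greedy routing closes up without extra steps. Pinning down this routing argument cleanly is the real content; everything else is bookkeeping.
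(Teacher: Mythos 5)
There is a genuine gap, and it sits exactly at the point you yourself flagged as ``the delicate point'': whether the arc from $a_i$ to $u(a_{i+1})$ exists. Your fallback routing $a_i \to b \to a_{i+1}$ does not repair it --- an arbitrary out-neighbour $b$ of $a_i$ in $B$ need not dominate $a_{i+1}$ --- and you explicitly leave the ``greedy routing closes up'' claim unproved, which is the whole theorem. The missing observation, which is essentially the entire content of the paper's proof, is that the hypothesis that each vertex of $A$ is dominated by \emph{exactly one} vertex of $U$, combined with the fact that $D$ is an orientation of a \emph{complete} bipartite graph, forces every arc between $v_i$ and $U\setminus\{u(v_i)\}$ to point from $v_i$ into $U$: the arc between $v_i$ and any $u_j$ with $j\neq i$ exists and cannot be directed toward $v_i$ by uniqueness, hence $v_i \to u_j$. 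In particular $v_i \to u_{i+1}$ for every $i$ (indices mod $|A|$), so the walk $u_1,v_1,u_2,v_2,\ldots,u_m,v_m,u_1$ that you wrote down in Step~2 is already closed with no detours and has length exactly $2|A|$. You actually deployed this very reasoning in weaker form (``if $a_i$ had no out-neighbour in $B$ then every vertex of $B$ dominates $a_i$, contradicting uniqueness''); pushed to full strength it dissolves the obstacle entirely.

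Two smaller remarks. First, the paper reads the hypothesis as making the dominators pairwise distinct ($|U|=|A|$, each $u_i$ having $v_i$ as a private out-neighbour); your Step~1 reduction to $|U|\le|A|$ allows $u(a_i)=u(a_{i+1})$, in which case the required arc $a_i\to u(a_{i+1})$ would contradict $u(a_i)\to a_i$, so you should keep the distinctness rather than discard it. Second, you need not avoid Theorem~\ref{bipwwthm}: the same observation ($u_i\to v_i$ and $v_i\to u_j$ for $j\neq i$) shows $D[A\cup U]$ \emph{is} strongly connected, which is exactly how the paper obtains existence before exhibiting the length-$2|A|$ walk. Your Step~3 domination argument is correct and matches the paper's.
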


\begin{proof}
Let $D$ be an orientation of a $K_{m,n}$ such that each vertex has at least one in-neighbour. Let $V(D)=A\cup B$, such that $|A|=m$ and  $|B|=n$. Let $A=\{v_1,v_2, \ldots,v_m\}$. Suppose $U$ is a set as defined in the statement of the corollary. Clearly $D[A\cup U]$ is strongly connected and dominating, so by Theorem~\ref{bipwwthm}, $D$ has a watchman's walk. We will exhibit a closed dominating walk of length $2|A|$.

By definition, each vertex in $U$ has a private out-neighbour in $A$. Thus, $|U|=m$; let $U=\{u_1,u_2, \ldots, u_m\}$. Label the vertices of $U$ such that vertex $u_i$ dominates $v_i$. It follows that $v_i$ dominates each vertex in $U\backslash \{u_i\}$. Thus, we can consider the closed walk $W=\{u_1, v_1, u_2, v_2, \ldots, u_m, v_m, u_1\}$. The walk $W$ uses every vertex in $A$ exactly once, and hence, $W$ is a closed dominating walk of length $2|A|$ in $D$.
\end{proof}

Theorem~\ref{bipwwthm} and Corollary~\ref{bipww} tells us that there exists orientations of complete bipartite graphs that have a watchman's walk. We generalize this result to orientations of complete multipartite graphs that do not contain any source vertices.

\begin{theorem}
Let $D$ be an orientation of a complete $k$-partite graph for $k>1$. If $\delta^-(D)\geq 1$, then the condensation of $D$ has a source vertex, and $D$ has a watchman's walk.
\end{theorem}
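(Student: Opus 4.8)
The plan is to produce a strongly connected subdigraph of $D$ whose vertex set is dominating and then to invoke Theorem~\ref{strongdigww}; the right subdigraph is the strongly connected component of $D$ that corresponds to a source vertex of the condensation $D^*$.

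I would start by recalling that $D^*$ is acyclic, so it has at least one vertex of in-degree zero — this is the asserted source vertex of the condensation, and establishing its existence needs nothing beyond $D$ being nonempty. Let $W\subseteq V(D)$ be the strongly connected component of $D$ corresponding to such a source. The key translation step is this: between two distinct strongly connected components every arc of $D$ is oriented the same way (otherwise, using strong connectivity inside each component, the two would be strongly connected to one another, hence a single component), so ``$W$ has in-degree zero in $D^*$'' says precisely that no arc of $D$ goes from $V(D)\setminus W$ into $W$; equivalently, every arc between $W$ and the rest of $D$ leaves $W$.

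Next I would show $W$ is non-trivial and meets at least two parts of the multipartition. If $|W|=1$, say $W=\{w\}$, then $\delta^-(D)\geq 1$ forces $w$ to have an in-neighbour in $D$, which must lie outside $W$, contradicting the previous paragraph; so $|W|\geq 2$. A strongly connected subdigraph on at least two vertices cannot be contained in a single part $X_i$, since $X_i$ is independent in the underlying complete $k$-partite graph; hence $W$ intersects at least two of $X_1,\dots,X_k$.

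Finally I would verify domination. Vertices of $W$ lie in $W$. Given $v\notin W$, say $v\in X_j$, pick a vertex $w\in W$ with $w\notin X_j$ (possible because $W$ meets at least two parts); then $wv$ is an arc of $D$, and it cannot be oriented $v\to w$ because no arc enters $W$, so it is $w\to v$ and $w$ dominates $v$. Thus every vertex of $D$ is in $W$ or dominated by $W$, while $D[W]$ is strongly connected, so Theorem~\ref{strongdigww} produces a watchman's walk. I do not expect a serious obstacle: essentially all the structural content is concentrated in the implication ``$|W|\geq 2 \Rightarrow W$ spans at least two parts,'' and that is exactly the step where the hypotheses $k>1$ and $\delta^-(D)\geq 1$, together with complete multipartiteness, are used.
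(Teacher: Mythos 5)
Your proposal is correct and follows essentially the same route as the paper: identify a source component $W$ of the condensation, use $\delta^-(D)\geq 1$ to force $|W|\geq 2$ and hence that $W$ meets at least two parts, conclude $W$ is a strongly connected dominating set, and apply Theorem~\ref{strongdigww}. Your vertex-by-vertex domination check (picking $w\in W$ outside the part of $v$) is in fact slightly more direct than the paper's argument via arcs between components of $D^*$, but the underlying idea is identical.
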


\begin{proof}
Let $D$ be an orientation of a complete multipartite graph such that $\delta^-(D)\geq 1$. That is, each vertex in $D$ has at least one in-neighbour. Let $\{T_1, T_2, \ldots T_k\}$ be the maximal strongly connected components of $D$. Consider the condensation, $D^*$. Let the vertices of $D^*$ be $\{t_1, t_2, \ldots, t_k\}$, where vertex $t_i$ in $D^*$ corresponds to the component $T_i$ in $D$. Let $t$ be any vertex in $D^*$, and let $P$ be a maximal path ending at $t$. Consider the start vertex $u$ of $P$. Any cycle in $D^*$ would correspond to a larger strongly connected component in $D^*$, contradicting the maximality of each $T_i$. Hence, $u$ has no in-neighbours on $P$. Since $P$ was maximal, $u$ also does not have any in-neighbours in the subdigraph $D^* \backslash P$. 
This means that, in $D$, any vertex in the component $U$ corresponding to the vertex $u \in D^*$ has no in-neighbours from any vertex in another strongly connected component. As each vertex must have at least one in-neighbour, $U$ must contain more than one vertex. Hence, $U$ contains vertices from more than one set in the partition of $V$. As $D$ is an orientation of a complete multipartite graph, it follows that there is at least one arc between a vertex in $U$ and a vertex in each set in the partition of $V$, and hence an arc between $U$ and every other maximal strong component. This guarantees that there is an arc between $u$ in $D^*$ and each other vertex in $D^*$. Since $u$ is a source in $D^*$, the component $U$ in $D$ is dominating. As $U$ is a strongly connected component, there is a closed walk containing all of the vertices in $U$. As $U$ is a dominating set, this walk is a closed dominating walk in $D$. Therefore, there is a watchman's walk for $D$.
\end{proof}
\section {Watchman's walks in tournaments}

 If a tournament is strongly connected, we call it a strong tournament. In this section, we refer to classical results, primarily those found in \cite{handbook} and \cite{algorithms}, to prove the existence of a watchman's walk in strong tournaments, and later, we generalize our result for tournaments that are not necessarily strong. We also provide bounds on the length of a watchman's walk in tournaments. We begin by stating a theorem that will be useful in the proof of subsequent results.

\begin{theorem} {\em\cite{semi}} \label{stronghamilton}
A tournament is Hamiltonian if and only if it is strongly connected.
\end{theorem}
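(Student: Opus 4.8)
The statement has two directions, and only one carries content. The forward direction is immediate: if $T$ has a Hamiltonian cycle, then for every ordered pair of vertices the cycle contains a directed subpath from the first to the second, so $T$ is strongly connected. The plan for the converse is a cycle-extension argument: take a longest directed cycle and show that if it missed a vertex we could build a longer one.

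Concretely, let $T$ be a strong tournament on $n \ge 3$ vertices (the cases $n \le 2$ are trivial or vacuous), and let $C = v_1 \to v_2 \to \cdots \to v_k \to v_1$ be a longest directed cycle in $T$; such a cycle exists because any strong tournament on at least three vertices contains a directed triangle. It suffices to prove $k = n$, so suppose $k < n$ and there is a vertex off $C$. Classify each $x \notin V(C)$ (using that $T$ is a tournament, so $x$ is joined to each $v_i$) as \emph{insertable} if $x$ has both an in-neighbour and an out-neighbour on $C$, a \emph{dominator} if $x \to v_i$ for every $i$, or \emph{dominated} if $v_i \to x$ for every $i$. If some $x$ is insertable, then reading around $C$ there is a consecutive pair with $v_i \to x$ and $x \to v_{i+1}$ (indices modulo $k$), and replacing the arc $v_i v_{i+1}$ by $v_i \to x \to v_{i+1}$ gives a cycle of length $k+1$, contradicting maximality.

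So we may assume every vertex off $C$ is a dominator or dominated; let $S$ be the dominators and $R$ the dominated vertices, so $V(T)\setminus V(C) = S \cup R$, with all arcs between $S$ and $C$ oriented into $C$ and all arcs between $C$ and $R$ oriented out of $C$. I expect this to be the main obstacle, and it is exactly where strong connectivity enters. If $S = \emptyset$, then no arc enters $V(C)$ from outside, and since $R \ne \emptyset$ there is no directed path from $R$ back to $C$, contradicting strong connectivity; hence $S \ne \emptyset$, and a symmetric argument gives $R \ne \emptyset$. Now the only arcs leaving $R$ go into $S$, so strong connectivity forces an arc $r \to s$ with $r \in R$ and $s \in S$. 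Fixing any $i$ we have $v_i \to r \to s \to v_{i+1}$, and replacing $v_i v_{i+1}$ by this path yields a cycle of length $k+2$, again contradicting the maximality of $C$. Therefore $k = n$ and $C$ is a Hamiltonian cycle. (Since the result is classical, one may alternatively simply invoke \cite{semi}; the sketch above is the standard argument.)
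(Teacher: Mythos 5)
Your proof is correct. Note that the paper gives no proof of this statement at all---it is quoted from \cite{semi} as a classical result (Camion's theorem)---so there is no in-paper argument to compare against; your job here was to supply the standard proof, and you have done so. The forward direction is handled correctly, and the converse via a longest cycle, the trichotomy of off-cycle vertices into insertable, dominator, and dominated, the consecutive-pair insertion for an insertable vertex, the use of strong connectivity to force both $S$ and $R$ nonempty and then to force an arc from $R$ into $S$, and the resulting cycle of length $k+2$ are all sound. Two small remarks: first, you assert without proof that a strong tournament on at least three vertices contains a directed triangle; this is true and standard, but for your argument you only need the existence of \emph{some} directed cycle, which is immediate from strong connectivity (a closed walk between two mutually reachable vertices contains a cycle), so you could avoid invoking the triangle fact entirely. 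Second, the phrase ``the only arcs leaving $R$ go into $S$'' should be read as referring to arcs from the set $R$ to its complement (arcs internal to $R$ may exist); in context this is clearly what you mean, and the conclusion that strong connectivity yields an arc $r\to s$ with $r\in R$, $s\in S$ is correct.
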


\begin{theorem}\label{strongww}
If $T$ is a strong tournament, then $T$ has a watchman's walk.
\end{theorem}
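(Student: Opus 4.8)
The plan is to invoke the characterization of Hamiltonian tournaments that was just stated. Since $T$ is a strong tournament, Theorem~\ref{stronghamilton} applies and tells us that $T$ is Hamiltonian; let $C$ be a Hamiltonian cycle of $T$. First I would observe that $C$, viewed as a closed walk, passes through every vertex of $T$, so it is trivially a closed dominating walk of $T$: every vertex of $T$ lies on the walk and hence is dominated. This shows that the set of closed dominating walks of $T$ is nonempty.

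Next I would note that the length of a walk is a nonnegative integer, so a nonempty set of closed dominating walks must contain one of minimum length. By definition, such a walk is a watchman's walk of $T$, which gives the conclusion. (Alternatively, the result is immediate from Theorem~\ref{strongdigww} by taking the strongly connected dominating subdigraph $D'$ to be all of $T$; the Hamiltonicity argument is the more self-contained route given that Theorem~\ref{stronghamilton} has just been recalled.)

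I do not expect a genuine obstacle in this proof — it is essentially a one-line application of Moon's theorem. The only points that need a word of care are the harmless well-ordering observation that guarantees a minimum-length closed dominating walk exists once we know such walks exist at all, and the trivial remark that a Hamiltonian cycle is automatically dominating because it visits all of $V(T)$.
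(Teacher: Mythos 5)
Your proof is correct and follows essentially the same route as the paper: both apply Theorem~\ref{stronghamilton} to obtain a Hamilton cycle, note it is a closed dominating walk since it visits every vertex, and conclude that a minimum closed dominating walk exists. The extra well-ordering remark and the alternative via Theorem~\ref{strongdigww} are fine but add nothing essential.
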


\begin{proof}
If $T$ is a strong tournament then, by Theorem \ref{stronghamilton}, $T$ has a Hamilton cycle $H$. Since every vertex is on this cycle, this is a closed dominating walk. This means that the set of closed dominating walks of $T$ is non-empty. So, a minimum closed dominating walk of $T$ exists.
\end{proof}

Theorem \ref{strongww} implies that any strong tournament on $n$ vertices has a closed dominating walk of length $n$. The question arises, do all vertices in a tournament would need to be included in a watchman's walk. Theorem \ref{nminustwo} below asserts that we can always find a closed dominating walk with no more than $n-2$ vertices for tournaments of order at least $5$. We first state the following theorem as a useful tool in the proof of Theorem \ref{nminustwo}.

\begin{theorem}{\em\cite{hamiltonian}}\label{cycles}
Every vertex in a strong tournament of order $n$ is contained in a cycle of length $k$ for $k=3,4,\ldots,n $.
\end{theorem}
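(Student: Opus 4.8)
This is Moon's classical theorem that every strong tournament is vertex-pancyclic, so the plan is the standard induction on cycle length: fix a vertex $v$, show it lies on a $3$-cycle, and then show that whenever $v$ lies on a cycle of length $k$ with $3\leq k<n$ it also lies on one of length $k+1$.

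For the base case, let $N^+(v)$ and $N^-(v)$ be the out- and in-neighbourhoods of $v$; since $T$ is strong and $n\geq 3$, neither is empty, and since $T$ is a tournament these sets partition $V(T)\setminus\{v\}$. I would argue there must be an arc from some $x\in N^+(v)$ to some $y\in N^-(v)$: otherwise every out-arc of a vertex of $N^+(v)$ lands back inside $N^+(v)$ (in a tournament no arc joins a vertex of $N^+(v)$ to $v$ either), so no vertex of $N^+(v)$ could reach $v$, contradicting strong connectivity. Given such an arc, $v\,x\,y\,v$ is a $3$-cycle through $v$.

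For the inductive step, let $v=v_1$ lie on the cycle $C=v_1v_2\cdots v_kv_1$ with $3\leq k<n$. If some $w\notin V(C)$ is \emph{mixed} with respect to $C$, meaning $w$ is dominated by at least one $v_i$ and dominates at least one $v_j$, then reading the arcs between $w$ and $v_1,\dots,v_k$ cyclically (indices modulo $k$) there must be consecutive indices with $v_\ell\to w$ and $w\to v_{\ell+1}$; inserting $w$ between $v_\ell$ and $v_{\ell+1}$ produces a $(k+1)$-cycle still containing $v_1$. Otherwise no vertex outside $C$ is mixed, so $V(T)\setminus V(C)$ splits into the set $A$ of vertices dominating all of $C$ and the set $B$ of vertices dominated by all of $C$. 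Strong connectivity forces $A\neq\emptyset$ and $B\neq\emptyset$ (otherwise $V(C)$ would have no arc leaving it, respectively no arc entering it), and forces an arc $b\to a$ with $b\in B$, $a\in A$, since $A$ is a proper nonempty vertex subset and no vertex of $C$ dominates a vertex of $A$.

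The only delicate point — which I expect to be the main obstacle — is that splicing both $b$ and $a$ into $C$ naively gives a cycle of length $k+2$, not $k+1$. The remedy is to discard one vertex of $C$: since $v_{k-1}\to b$ (as $C$ dominates $B$), $b\to a$, and $a\to v_1$ (as $A$ dominates $C$), the walk $v_1v_2\cdots v_{k-1}\,b\,a\,v_1$ is a cycle of length exactly $k+1$ through $v=v_1$. (When $k=n-1$ the unique vertex outside $C$ cannot lie in $A$ or $B$ without breaking strong connectivity, so that case falls under the mixed case and no conflict arises.) Combining the two cases completes the induction, so every vertex lies on a cycle of each length $3,4,\dots,n$.
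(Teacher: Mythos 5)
The paper does not prove this statement; it is quoted verbatim from the cited reference as Moon's classical theorem that every strong tournament is vertex-pancyclic, so there is no in-paper argument to compare against. Your proof is the standard and correct argument for that theorem: the base case via an arc from $N^+(v)$ to $N^-(v)$, the insertion of a mixed vertex to extend a $k$-cycle to a $(k+1)$-cycle, and, in the non-mixed case, the splice $v_1v_2\cdots v_{k-1}\,b\,a\,v_1$ that trades $v_k$ for the pair $b\in B$, $a\in A$ --- all steps check out, including the existence of the arc $b\to a$ and the $k=n-1$ caveat. The only blemish is the swapped ``respectively'' in your justification that $A$ and $B$ are nonempty: if $A=\emptyset$ then no arc \emph{enters} $V(C)$ from outside (every outside vertex lies in $B$ and is dominated by all of $C$), and if $B=\emptyset$ then no arc \emph{leaves} $V(C)$; the conclusion is unaffected.
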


\begin{theorem}\label{nminustwo}
If $T$ is a strong tournament of order $n \geq 5$, then $w(T)\leq n-2$.
\end{theorem}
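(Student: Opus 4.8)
The plan is to exhibit an explicit closed dominating walk of length exactly $n-2$, and the obvious candidate is a cycle of that length. Since $n\geq 5$ we have $n-2\geq 3$, so Theorem~\ref{cycles} guarantees that cycles of length $n-2$ exist (indeed, through any prescribed vertex). So the first step is: fix a cycle $C$ of length $n-2$ and let $a,b$ be the two vertices of $T$ not on $C$. Viewed as a closed walk, $C$ has length $n-2$ and dominates exactly its own $n-2$ vertices together with their out-neighbours, so if each of $a$ and $b$ happens to have an in-neighbour on $C$, then $C$ is already a closed dominating walk of length $n-2$ and $w(T)\leq n-2$.

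The point is that if this fails, it fails rigidly. Suppose, say, $a$ has no in-neighbour on $C$; then in the tournament $a$ beats all $n-2$ vertices of $C$, so $\deg^+(a)\geq n-2$ and hence $\deg^-(a)\leq 1$. Since $T$ is strong it has no source, so $\deg^-(a)=1$; and $a$ cannot also beat $b$ (that would make $a$ a source), so the unique in-neighbour of $a$ in $T$ is precisely the other omitted vertex $b$. The second step now resolves this case: apply Theorem~\ref{cycles} again to obtain a cycle $C'$ of length $n-2$ passing through $a$. The predecessor of $a$ on $C'$ is an in-neighbour of $a$, hence must be $b$, so $b$ lies on $C'$ as well. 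Therefore the two vertices of $T$ not on $C'$ are both different from $b$, and since $a$ beats every vertex of $T$ except $b$, both of those vertices are out-neighbours of $a\in V(C')$ and so are dominated by $C'$. Thus $C'$ is a closed dominating walk of length $n-2$, and $w(T)\leq n-2$.

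I expect the only real obstacle to be spotting the rigidity in the second step --- that a "bad" cycle of length $n-2$ forces one of its omitted vertices to have in-degree $1$ with in-neighbour exactly the other omitted vertex --- after which re-selecting a cycle through that low in-degree vertex (legitimate precisely because $n-2\geq 3$, which is where the hypothesis $n\geq 5$ is used) makes the argument close immediately. A minor bookkeeping point is to confirm that, in a tournament, "$a$ has no in-neighbour on $C$" is literally the statement "$a$ beats every vertex of $C$", so that the dichotomy in the first step is genuinely exhaustive.
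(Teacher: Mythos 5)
Your proof is correct, and it shares its opening move with the paper's: fix an $(n-2)$-cycle $C$ via Theorem~\ref{cycles}, and observe that the only obstruction is an omitted vertex with no in-neighbour on $C$, which (since a strong tournament has no source) forces that vertex to beat all of $C$ and to have the other omitted vertex as its unique in-neighbour. Where you diverge is in resolving this exceptional case. You invoke the ``through any prescribed vertex'' strength of Theorem~\ref{cycles} a second time: a new $(n-2)$-cycle $C'$ through the problem vertex $a$ must enter $a$ via its unique in-neighbour $b$, hence contains $b$, and then both vertices off $C'$ are out-neighbours of $a$, so $C'$ dominates and has length exactly $n-2$. The paper instead builds a dominating closed walk of length $3$ directly: with $v$ the omitted vertex beating all of $C$, $u$ its unique in-neighbour, and $x$ any in-neighbour of $u$ on $C$, the walk $x,u,v,x$ is closed ($v$ beats $x$) and dominating ($v$ covers $C$, $u$ covers $v$, $x$ covers $u$). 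Both arguments are sound and of comparable length; the paper's resolution yields the stronger conclusion $w(T)\leq 3$ in the exceptional case, while yours produces a dominating cycle of length exactly $n-2$ in every case and leans more heavily on the vertex-pancyclicity (rather than mere pancyclicity) in Theorem~\ref{cycles}. In both, the hypothesis $n\geq 5$ enters only to guarantee $n-2\geq 3$ so that the cited cycle theorem applies.
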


\begin{proof}

Let $T$ be a strong tournament of order $n\geq 5$. From Theorem \ref{cycles}, there is some cycle of length $n-2$ in $T$, call it $C$. Consider the vertices $u$ and $v$ that are not in $C$; without loss of generality, let $uv$ be an arc. If $C$ dominates $\{u,v\}$ then $C$ is a closed dominating walk and hence $w(T)\leq n-2$. If $C$ does not dominate $\{u,v\}$ then since $T$ is strongly connected, there must be at least one arc from a vertex in $C$ to $u$, and no arc from a vertex in $C$ to $v$. Let $x$ be a vertex in $C$ such that $xu$ is an arc. We have that $W=x,u,v,x$ is a closed dominating walk of length $3\leq n-2$. Thus the result holds. \end{proof}

The following theorem draws a connection between the watchman's walk of a spanning subdigraph and the tournament itself.

\begin{theorem} \label{spanningww}
Let $T$ be a tournament and $T'$ be a spanning subdigraph of $T$. If $T'$ has a watchman's walk, then $w(T)\leq w(T')$.
\end{theorem}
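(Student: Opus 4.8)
The plan is to observe that passing from the spanning subdigraph $T'$ to $T$ only adds arcs, and that adding arcs can neither destroy a closed walk nor weaken domination. So a watchman's walk of $T'$ should serve directly as a (not necessarily minimum) closed dominating walk of $T$, which both establishes that $w(T)$ is well-defined and gives the desired bound.

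Concretely, I would first let $W'$ be a watchman's walk of $T'$, so $W'$ is a closed dominating walk in $T'$ of length $w(T')$. Since $T'$ is a spanning subdigraph of $T$, we have $V(T')=V(T)$ and $A(T')\subseteq A(T)$. Each consecutive pair of vertices traversed by $W'$ corresponds to an arc of $T'$, hence to an arc of $T$; therefore $W'$ is also a closed walk in $T$, of the same length.

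Next I would check domination is preserved. Let $z$ be any vertex of $T$. Since $V(T)=V(T')$, either $z$ lies on $W'$, or, because $W'$ dominates $T'$, there is a vertex $y$ on $W'$ with $yz\in A(T')\subseteq A(T)$. In either case $z$ is dominated by $W'$ in $T$. Hence $W'$ is a closed dominating walk in $T$, so the set of closed dominating walks of $T$ is nonempty, $w(T)$ is defined, and $w(T)\le \text{length}(W')=w(T')$.

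There is essentially no serious obstacle here; the only point requiring a word of care is the direction of the monotonicity — one must note that enlarging the arc set enlarges each out-neighbourhood, so domination can only improve, and that the closed walk $W'$ uses only arcs already present, so its validity transfers to $T$. I would also remark that the same argument works verbatim for an arbitrary digraph $D$ in place of $T$, but state it for tournaments to match the surrounding development.
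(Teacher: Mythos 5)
Your proposal is correct and follows essentially the same route as the paper: transfer the watchman's walk of $T'$ to $T$ using $V(T')=V(T)$ and $A(T')\subseteq A(T)$, note it remains a closed dominating walk, and conclude $w(T)\leq w(T')$. You simply spell out the domination-preservation step in a bit more detail than the paper does.
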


\begin{proof}
Let $T'$ be a subdigraph of $T$ such that $V(T')=V(T)$. Suppose $W$ is a watchman's walk in $T'$. Since $T'$ is a subdigraph of $T$, $W$ is also a walk in $T$. Moreover, $V(T')=V(T)$ and $W$ is a dominating walk in $T'$, so $W$ is also a dominating walk in $T$. Thus, any watchman's walk $W$ in $T'$ is also closed dominating walk in $T$. However, there may be a shorter closed dominating walk in $T$. Therefore, $w(T)\leq w(T')$.
\end{proof}

If we have a spanning subdigraph $T'$ in $T$, we cannot assume that $T'$ has a watchman's walk, since $T'$ is a digraph. However, if $T'$ has a watchman's walk, Theorem~\ref{spanningww} shows that the length of a watchman's walk in $T$ is bounded above by $w(T')$. Moreover, if we have a (not necessarily spanning) subdigraph $T'$ such that the vertices of $T'$ are a dominating set in $T$, we know that $w(T)$ is at most $w(T')$. This fact is a strengthening of Theorem~\ref{spanningww}, and is proved in the following theorems.

Note that a tournament $T=(V,A)$ is transitive if for any vertices $a,b$, and $c$, $(a,b)\in A$, and $(b,c)\in A$ implies that $(a,c)\in A$. From \cite{condtrans}, we get the following important theorem.

\begin{theorem}{\em\cite{condtrans}}\label{condtransitive}
The condensation of any tournament is a transitive tournament.
\end{theorem}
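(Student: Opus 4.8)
The plan is to establish the statement in three steps: (i) the condensation $T^*$ of a tournament $T$ is again a tournament; (ii) $T^*$ is acyclic; and (iii) any acyclic tournament is transitive. Combining (i)--(iii) immediately yields the theorem. Throughout, write $\{T_1,\dots,T_k\}$ for the maximal strongly connected components of $T$ and $t_i$ for the vertex of $T^*$ corresponding to $T_i$.

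For step (i), I would fix two distinct components $W$ and $U$. Since $T$ is a tournament, for every $w\in W$ and $u\in U$ exactly one of the arcs $wu$, $uw$ lies in $A$, so there is at least one arc between $W$ and $U$; the only thing to check is that these arcs are all oriented the same way, which is what the definition of the condensation requires. Suppose not, say $w_1u_1\in A$ and $u_2w_2\in A$ with $w_1,w_2\in W$ and $u_1,u_2\in U$. Because $W$ is strongly connected there is a directed path from $w_2$ to $w_1$ inside $W$, and because $U$ is strongly connected there is a directed path from $u_1$ to $u_2$ inside $U$. Concatenating $w_1\to u_1\to\cdots\to u_2\to w_2\to\cdots\to w_1$ gives a closed walk meeting both $W$ and $U$, so $W\cup U$ is contained in a single strongly connected component, contradicting the maximality of $W$ and $U$. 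Hence all arcs between $W$ and $U$ point the same way, there is exactly one arc between $t_W$ and $t_U$ in $T^*$, and $T^*$ has no loops (a loop at $t_W$ would be an arc inside $W$, not between components); thus $T^*$ is a tournament.

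For step (ii), I would use the standard fact that the condensation of \emph{any} digraph is acyclic, verified by the same patching idea: a directed cycle $t_{W_1}\to t_{W_2}\to\cdots\to t_{W_\ell}\to t_{W_1}$ with $\ell\ge 2$ would, on choosing a representative arc between consecutive components and joining the endpoints by directed paths inside each $W_i$, produce a closed walk meeting $W_1,\dots,W_\ell$, forcing their union to lie in one strongly connected component and again contradicting maximality. For step (iii), note that an acyclic tournament in particular contains no directed $3$-cycle; so if $t_at_b$ and $t_bt_c$ are arcs of $T^*$, then since $T^*$ is a tournament either $t_at_c$ or $t_ct_a$ is an arc, and the latter would create the directed triangle $t_a\to t_b\to t_c\to t_a$. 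Acyclicity rules this out, so $t_at_c\in A(T^*)$, proving transitivity.

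I do not expect a genuine obstacle here; the entire content is the maximality argument, which appears twice (in steps (i) and (ii)) in essentially the same form --- patch arcs between components together with directed paths internal to the components to contradict maximality of the strongly connected components. The only points requiring a little care are the degenerate cases (a trivial component, or making sure $T^*$ is loop-free), both of which are immediate.
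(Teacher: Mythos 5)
Your proof is correct: the three steps (the condensation of a tournament is itself a tournament, the condensation of any digraph is acyclic, and an acyclic tournament is transitive) are each argued soundly, with the key maximality-of-components argument applied properly in steps (i) and (ii). Note, however, that the paper offers no proof of this statement at all --- it is imported as a known result from the cited reference --- so there is nothing internal to compare against; your argument is the standard one and would serve as a self-contained justification.
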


We show below that Theorem~\ref{condtransitive} limits the possible vertices that may occur in a watchman's walk. Given a tournament $T$, where $T'$ is the maximal strong component that corresponds to the dominating vertex in the condensation of $T$, we call $T'$ the \textit{dominating strong component}. Since any transitive tournament has a dominating vertex, and the condensation of a tournament is a transitive tournament, the dominating strong component in a tournament always exists.

\begin{theorem} \label{dominationcond}
Let $T$ be a tournament. If $T'$ is the dominating strong component in $T$, then $w(T)=w(T')$.
\end{theorem}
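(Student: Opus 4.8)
The plan is to prove the two inequalities $w(T) \leq w(T')$ and $w(T) \geq w(T')$ separately. The first direction is essentially immediate from earlier results: since $T'$ is the dominating strong component, by definition the vertex of the condensation $T^*$ corresponding to $T'$ dominates every other vertex of the transitive tournament $T^*$ (using Theorem~\ref{condtransitive}), and consequently the vertex set of $T'$ is a dominating set in $T$. As $T'$ is strongly connected, Theorem~\ref{strongdigww} (or directly: a strong tournament has a Hamilton cycle by Theorem~\ref{stronghamilton}) gives a closed dominating walk, and any watchman's walk of $T'$ is a closed dominating walk of $T$, so $w(T) \leq w(T')$.

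The reverse inequality is the crux, and here is where I expect the main obstacle. I would take a watchman's walk $W$ in $T$ and argue that it must be entirely contained in $T'$, from which $w(T') \leq \text{length}(W) = w(T)$ follows since $W$ restricted to $T'$ is still a valid closed walk there (all arcs of $W$ lie in $T'$) and it must be dominating within $T'$. To show $W \subseteq T'$: consider the condensation $T^*$, which is a transitive tournament by Theorem~\ref{condtransitive}, hence has a linear order on its strong components $T_1 = T', T_2, \ldots$ with all arcs going ``downward'' from earlier to later components. A closed walk $W$ in $T$ projects to a closed walk in $T^*$; but in a transitive tournament (a DAG once we remove nothing — it's acyclic as a simple digraph), the only closed walks are those that stay within a single vertex, i.e. a single strong component. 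So all vertices of $W$ lie in one strong component $T_j$. It remains to rule out $j \neq 1$: if $W$ lies in $T_j$ with $j \geq 2$, then $W$ cannot dominate any vertex of $T'= T_1$, because every arc between $T_1$ and $T_j$ points from $T_1$ to $T_j$, so no vertex of $T_j$ has an out-neighbour in $T_1$ — contradicting that $W$ is dominating in $T$. Hence $j = 1$ and $W \subseteq T'$.

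Assembling these: $w(T) = \text{length}(W) \geq w(T')$ since $W$ is a closed dominating walk in $T'$, and combined with the first paragraph we get $w(T) = w(T')$. The one point requiring a little care is the claim that a closed walk in a transitive tournament's condensation cannot move between distinct vertices — this is exactly the observation that a transitive tournament, viewed as a simple digraph, has no directed cycles, so any closed walk is constant; I would state this as a one-line lemma or fold it into the proof. A second small point: one must confirm $W$ genuinely dominates $T'$ and not merely that its arcs lie in $T'$ — but this is automatic since $W$ dominates all of $V(T) \supseteq V(T')$. I do not anticipate that either of these causes real trouble; the argument is short once the acyclicity of the condensation is invoked.
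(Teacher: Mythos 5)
Your proof is correct and follows essentially the same route as the paper's: both rely on the transitivity of the condensation (Theorem~\ref{condtransitive}) to conclude that no arcs enter $T'$ from outside, so that a watchman's walk of $T'$ is a closed dominating walk of $T$ and any closed dominating walk of $T$ must lie entirely in $T'$. If anything, your treatment of the lower bound is slightly more careful: the paper asserts that a shorter walk meeting $V(T)\setminus V(T')$ ``would not be closed,'' which overlooks closed walks lying entirely inside some other strong component, a case you correctly dispose of by observing that such a walk cannot dominate $T'$.
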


\begin{proof}
Let $T$ be a tournament. If $T$ is a strong tournament, then $T^*$ has exactly one component, and this component is $T$ itself. In this case, the result is clear.

Now suppose that $T$ is not strong. This means, by Theorem \ref{condtransitive}, $T^*$ is transitive with more than one component. Let $t'$ be the dominating vertex of $T^*$, which corresponds to the dominating strong component of $T$. Since $T^*$ is transitive, $t'$ is a dominating vertex. This means in $T$, there are no arcs from $V(T)\backslash V(T')$ to $V(T')$. Thus, a watchman's walk in $T'$ is a closed dominating walk in $T$. It is also minimal as any shorter walk that included a vertex from $V(T)\backslash V(T')$ would not be closed, and a shorter walk completed inside $T'$ would not be dominating. Thus $w(T)=w(T')$.

\end{proof}

From the proof of Theorem~\ref{condtransitive}, it is clear that the watchman's walk in a tournament is always contained in the dominating strong component. This result is stated below in Corollary~\ref{condww}. It also leads us to an important result in Theorem~\ref{allww}.

\begin{corollary} \label{condww}
If $T$ is a tournament, and $T'$ is the dominating component in the $T^*$, then all watchman's walks for $T$ are contained in $T'$.
\end{corollary}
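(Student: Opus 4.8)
The plan is to leverage Theorem~\ref{dominationcond} almost directly, since it already establishes the numerical equality $w(T) = w(T')$; what remains for the corollary is the structural claim that \emph{every} watchman's walk of $T$ actually lives inside $T'$, not merely that one of minimum length can be found there. So the first step is to dispose of the trivial case: if $T$ is strong, then $T^*$ has a single vertex, $T' = T$, and every walk of $T$ is vacuously contained in $T'$.

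For the main case, suppose $T$ is not strong. By Theorem~\ref{condtransitive}, $T^*$ is a transitive tournament on at least two vertices, so it has a unique dominating vertex $t'$, corresponding to $T'$. The key observation, already extracted in the proof of Theorem~\ref{dominationcond}, is that since $t'$ dominates every other vertex of $T^*$, there are no arcs in $T$ from $V(T)\setminus V(T')$ back into $V(T')$. I would then argue by contradiction: let $W$ be any watchman's walk of $T$, and suppose $W$ uses some vertex $x \notin V(T')$. Since $W$ is a closed walk, after visiting $x$ it must eventually return to wherever it started, which (as $W$ visits at least one vertex of $T'$, or even if it doesn't) forces the walk to traverse an arc leaving the set $V(T) \setminus V(T')$ — but there are no such arcs, because $t'$ is a source-dominating vertex in the condensation. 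More carefully: once $W$ enters $V(T)\setminus V(T')$ it can never leave it, so if $W$ starts inside $V(T')$ it can never close, and if $W$ starts outside $V(T')$ it never reaches $V(T')$ and hence cannot be dominating (since $V(T')$ contains vertices dominated only from within $V(T')$). Either way we contradict $W$ being a closed dominating walk.

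Hence every watchman's walk of $T$ is confined to $V(T')$, and in fact (being dominating in $T$, hence dominating within $T'$, and closed) is a closed dominating walk of $T'$; its length equals $w(T) = w(T')$ by Theorem~\ref{dominationcond}, so it is a watchman's walk of $T'$. I do not anticipate a serious obstacle here — the content is essentially bookkeeping on top of Theorem~\ref{dominationcond} — but the one point requiring a little care is the case analysis on where the hypothetical stray walk $W$ starts, and making precise why a walk trapped in $V(T)\setminus V(T')$ fails to dominate (namely, that at least one vertex of $T'$ has all its in-neighbours inside $T'$, which follows from strong connectivity of $T'$ together with the absence of arcs into $V(T')$ from outside).
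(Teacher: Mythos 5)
Your proposal is correct and follows essentially the same route as the paper, which states the corollary without a separate proof, deriving it from the key observation in the proof of Theorem~\ref{dominationcond} that no arcs run from $V(T)\setminus V(T')$ into $V(T')$. Your write-up simply makes explicit the two cases that observation entails (a walk touching both sides cannot close; a walk confined to $V(T)\setminus V(T')$ cannot dominate $T'$), which is exactly the intended argument.
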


The fact that any tournament has a watchman's walk is not immediately obvious for tournaments that are not strong, as many digraphs that are not strongly connected do not have a watchman's walk. In fact, there are many infinite families of digraphs that have no watchman's walk, including the family of orientations of paths on $n\geq 4$ vertices.

\begin{theorem}\label{allww}
If $T$ is a tournament, then $T$ has a watchman's walk.
\end{theorem}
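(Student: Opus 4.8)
The plan is to assemble this directly from the structural results already established for the condensation. The key observation is that Theorem~\ref{dominationcond} reduces the question for an arbitrary tournament $T$ to the question for its dominating strong component $T'$, and that $T'$ is itself a strong tournament, for which Theorem~\ref{strongww} already guarantees a watchman's walk.

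First I would recall that the dominating strong component $T'$ of $T$ always exists: by Theorem~\ref{condtransitive} the condensation $T^*$ is a transitive tournament, every transitive tournament has a (unique) dominating vertex, and that vertex corresponds to a maximal strong component $T'$ of $T$. Next I would note that $T'$, being an induced subdigraph of a tournament, is itself a tournament, and by construction it is strongly connected; hence $T'$ is a strong tournament. Applying Theorem~\ref{strongww} to $T'$ yields that $T'$ has a watchman's walk --- in particular, the set of closed dominating walks of $T'$ is nonempty.

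Finally I would invoke Theorem~\ref{dominationcond}, which gives $w(T)=w(T')$; more precisely, a closed dominating walk of minimum length in $T'$ is a closed dominating walk in $T$ (since there are no arcs from $V(T)\setminus V(T')$ into $V(T')$, so the walk stays in $T'$ and $T'$ dominates $T$), and no shorter closed dominating walk of $T$ is possible. Therefore the set of closed dominating walks of $T$ is nonempty and a minimum one exists, i.e.\ $T$ has a watchman's walk.

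I do not expect a genuine obstacle here; the statement is essentially a corollary of Theorems~\ref{strongww} and~\ref{dominationcond}. The only point requiring a word of care is the verification that the dominating strong component is a strong tournament (so that Theorem~\ref{strongww} genuinely applies), together with the observation that ``$T$ has a watchman's walk'' is precisely the assertion that the family of closed dominating walks is nonempty, which the walk transported from $T'$ witnesses.
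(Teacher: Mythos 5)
Your proposal is correct and follows essentially the same route as the paper: identify the dominating strong component $T'$ via the transitive condensation, apply Theorem~\ref{strongww} to the strong tournament $T'$, and transfer the resulting closed dominating walk back to $T$ (the paper phrases this last step via Corollary~\ref{condww}, you via Theorem~\ref{dominationcond}, but the content is the same). If anything, your explicit remark that the walk in $T'$ is already a closed dominating walk of $T$ because $T'$ dominates $T$ is slightly cleaner than the paper's appeal to ``a watchman's walk of $T$, if it exists, is a watchman's walk of $T'$.''
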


\begin{proof}
Let $T$ be a tournament. If $T$ is strong, then by Theorem~\ref{strongww}, $T$ has a watchman's walk. Suppose that $T$ is not strong, and consider the dominating vertex in $T^*$. This vertex corresponds to the dominating maximal strong component in $T$. This component $T'$ is strongly connected, so the subtournament $T'$ has a watchman's walk. By Corollary~\ref{condww}, a watchman's walk of $T$, if it exists, is a watchman's walk of $T'$. Since $T'$ has a watchman's walk, $T$ also has a watchman's walk.
\end{proof}

A \textit{semicomplete digraph} is a digraph in which there is at least one arc between any pair of vertices. The previous theorem can be extended to semicomplete digraphs with only minor changes to the proof given above.

\begin{corollary}
If $D$ is a semicomplete digraph, then $D$ has a watchman's walk.
\end{corollary}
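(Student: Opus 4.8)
The plan is to mimic the proof of Theorem~\ref{allww} almost verbatim, checking at each point that the only property of tournaments being used is the existence of at least one arc between any two vertices, which is exactly the defining property of a semicomplete digraph. Concretely, I would first observe that all the structural tools invoked in the tournament argument carry over: a semicomplete digraph that is strongly connected has a Hamilton cycle (the Moon/Camion-type theorem \ref{stronghamilton} holds for semicomplete digraphs, not just tournaments), so a strong semicomplete digraph has a watchman's walk by the same argument as Theorem~\ref{strongww}. For the non-strong case, the key fact is that the condensation $D^*$ of a semicomplete digraph is a transitive \emph{tournament}: between any two maximal strong components there is at least one arc, and no arc can go both ways between distinct components (that would merge them), so $D^*$ has exactly one arc between each pair of its vertices and is acyclic, hence a transitive tournament. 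Thus $D^*$ has a dominating vertex, giving a dominating maximal strong component $D'$ in $D$.

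The argument would then run as follows. If $D$ is strong, apply the semicomplete analogue of Theorem~\ref{strongww} and we are done. Otherwise, let $D'$ be the dominating strong component, i.e. the component corresponding to the dominating vertex of the transitive tournament $D^*$. Because that vertex dominates $D^*$, there are no arcs from $V(D)\setminus V(D')$ into $V(D')$, so every vertex of $D$ is either in $D'$ or is an out-neighbour of some vertex of $D'$; hence $V(D')$ is a dominating set of $D$. Since $D'$ is strongly connected, there is a closed walk through all of its vertices, and this walk is a closed dominating walk in $D$ by Theorem~\ref{strongdigww} (or directly). Therefore the set of closed dominating walks of $D$ is nonempty and a minimum one — a watchman's walk — exists.

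I would present this as a short paragraph pointing out that one simply replaces ``tournament'' by ``semicomplete digraph'' throughout the chain Theorem~\ref{stronghamilton} $\to$ Theorem~\ref{strongww} $\to$ Theorem~\ref{condtransitive} $\to$ Theorem~\ref{dominationcond}/Corollary~\ref{condww} $\to$ Theorem~\ref{allww}, and that each of those proofs used only the ``at least one arc between every pair'' hypothesis. The one genuine checkpoint — the part that is not purely cosmetic — is verifying that Theorem~\ref{stronghamilton} (strongly connected $\Rightarrow$ Hamiltonian) and Theorem~\ref{condtransitive} (condensation is a transitive tournament) still hold for semicomplete digraphs; both are classical and appear in the references already cited (\cite{handbook}, \cite{algorithms}), so I would just cite them in the semicomplete form. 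Everything after that is identical to the tournament case, so I expect no real obstacle — the work is confirming that no step secretly used antisymmetry of the arc relation, and it does not: the only places a ``missing'' reverse arc mattered were in concluding $D^*$ is acyclic and that components don't merge, which remain valid.
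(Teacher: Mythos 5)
Your proposal is correct and matches the paper's intent exactly: the paper offers no separate proof, remarking only that Theorem~\ref{allww} ``can be extended to semicomplete digraphs with only minor changes,'' and your fleshed-out version identifies precisely the right checkpoints (strong semicomplete digraphs are Hamiltonian, as the paper itself later records in Theorem~\ref{semihamcycle}, and the condensation is a transitive tournament). Nothing in your argument diverges from the route the paper has in mind.
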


Using Theorem~\ref{allww}, we can extend the bound in Theorem~\ref{nminustwo} from strong tournaments to tournaments in general to get the following bound.

\begin{corollary}
If $T$ is a tournament of order $n\geq 5$, then $w(T)\leq n-2$.
\end{corollary}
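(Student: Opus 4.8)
The plan is to reduce the general tournament case to the strong tournament case, which is already handled by Theorem~\ref{nminustwo}, using the structural results established earlier in this section. First I would invoke Theorem~\ref{dominationcond}: if $T'$ is the dominating strong component of $T$, then $w(T)=w(T')$. So it suffices to bound $w(T')$ in terms of $n$, the order of the whole tournament $T$.

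The natural split is on the order $n'$ of $T'$. If $n' \geq 5$, then since $T'$ is strong, Theorem~\ref{nminustwo} gives $w(T') \leq n' - 2 \leq n - 2$, and we are done. The remaining cases are $n' \in \{1,2,3,4\}$, and here the fact that $n \geq 5$ does real work. If $n' = 1$, then $T$ has a single dominating vertex $v$ with $\deg^+(v) = n-1$, and the watchman need not move at all, so $w(T') = 0 \leq n-2$. If $n' = 2$, then $T'$ is strong on two vertices, so it is a directed $2$-cycle, and $w(T') = 2 \leq n-2$ (using $n \geq 5$, indeed $n \geq 4$ suffices). If $n' = 3$, then $T'$ is a strong tournament on three vertices, hence a directed triangle, so $w(T') = 3 \leq n - 2$ since $n \geq 5$. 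If $n' = 4$, then $T'$ is a strong tournament on four vertices; by Theorem~\ref{cycles} every vertex lies on a Hamilton cycle, so $w(T') \leq 4 \leq n-2$, again using $n \geq 5$.

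Assembling these: in every case $w(T) = w(T') \leq n - 2$, which is the claim. I would write the $n' \geq 5$ case as the main line of argument and dispatch the four small cases $n' \leq 4$ in one or two sentences each, emphasizing that the hypothesis $n \geq 5$ is exactly what makes the bound $n-2 \geq 3$ large enough to absorb a small dominating component of order up to $4$.

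I do not expect a serious obstacle here; the content is entirely in Theorem~\ref{nminustwo} and Theorem~\ref{dominationcond}, and the only thing to be careful about is not to apply Theorem~\ref{nminustwo} when $n' < 5$ (where its hypothesis fails) — that is precisely why the small-order components must be handled separately and why the threshold $n \geq 5$ propagates cleanly to the final bound.
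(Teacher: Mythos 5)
Your overall strategy---reduce to the dominating strong component $T'$ via Theorem~\ref{dominationcond} and then apply Theorem~\ref{nminustwo} when $T'$ is large enough---is exactly the reduction the paper intends (it invokes Theorem~\ref{allww} together with Theorem~\ref{nminustwo} and leaves the details to the reader). However, your case $n'=4$ contains a genuine gap: you conclude $w(T')\leq 4\leq n-2$ ``again using $n\geq 5$,'' but $4\leq n-2$ requires $n\geq 6$. When $n=5$ and the dominating strong component has order $4$ (which certainly occurs: take the unique strong tournament on four vertices and append a sink), your argument yields only $w(T)\leq 4$, while the corollary demands $w(T)\leq 3$. The statement is still true in this case, but you need something sharper than ``$T'$ has a Hamilton cycle.'' A strong tournament of order $4$ has no dominating vertex (a vertex of out-degree $3$ would be a source, contradicting strong connectivity), so $\gamma(T')=2$ by Theorem~\ref{dom7}; a closed dominating walk of length $2$ would be a directed $2$-cycle, which cannot occur in a tournament, so Theorem~\ref{domset} forces $w(T')=3\leq n-2$. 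With that repair every small case gives $w(T')\leq 3$, and the proof closes.

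A minor further point: your case $n'=2$ is vacuous rather than merely easy. A tournament has exactly one arc between any pair of vertices, so there is no strong tournament of order $2$ and no directed $2$-cycle; the possible orders of the dominating strong component are $1,3,4,\ldots$, and your description of that case as ``a directed $2$-cycle'' would be false if the case could arise.
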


The upper bound given above is best possible for tournaments of small order. Consider a tournament on $5$ vertices that does not have a source vertex. In this case, $w(T)\geq 3$.

\section{Domination number}

\subsection{Watchman's walk in relation to domination number}
In general, for both undirected and directed graphs, there can be a large difference between the domination number and the length of a watchman's walk in a graph. In this section, we show that this is not the case for tournaments. We begin by proving a relationship between the domination number and size of a watchman's walk in a tournament. We later give further results relating to domination number and variants.

\begin{theorem} {\em\cite{redei}} \label{hampath}
Every tournament contains a Hamilton path.
\end{theorem}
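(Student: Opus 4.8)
The plan is to prove this by induction on the order $n$ of the tournament $T$. The base case $n=1$ is immediate, since a single vertex is trivially a Hamilton path. For the inductive step, I would assume the result holds for all tournaments on fewer than $n$ vertices, pick an arbitrary vertex $v\in V(T)$, and apply the induction hypothesis to the subtournament $T-v$ on $n-1$ vertices to obtain a Hamilton path $P=v_1,v_2,\ldots,v_{n-1}$ of $T-v$. The problem then reduces to splicing $v$ into $P$ so as to produce a Hamilton path of $T$.

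For the splicing step I would use the fact that $T$ is a tournament, so for every index $j$ exactly one of ``$v_jv$ is an arc'' or ``$vv_j$ is an arc'' holds. I distinguish three cases. If $vv_1$ is an arc, then $v,v_1,v_2,\ldots,v_{n-1}$ is a Hamilton path of $T$. If $v_{n-1}v$ is an arc, then $v_1,\ldots,v_{n-1},v$ is a Hamilton path of $T$. Otherwise both $v_1v$ and $vv_{n-1}$ are arcs; let $i$ be the largest index for which $v_iv$ is an arc. Such an $i$ exists because $v_1v$ is an arc, and $i\leq n-2$ because $vv_{n-1}$ is an arc. By maximality of $i$, the arc between $v_{i+1}$ and $v$ must be $vv_{i+1}$, and since $v_1,\ldots,v_i$ and $v_{i+1},\ldots,v_{n-1}$ are already consecutive along $P$, the sequence $v_1,\ldots,v_i,v,v_{i+1},\ldots,v_{n-1}$ is a Hamilton path of $T$. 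In every case $T$ has a Hamilton path, completing the induction.

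Since this is a classical result with a purely combinatorial argument, I do not anticipate a genuine obstacle; the only point that requires care is the case analysis in the splicing step, in particular correctly locating the index $i$ at which the orientation of the arcs joining $v$ to $P$ ``switches'' and checking that the resulting sequence really is a directed path under this paper's convention that an arc from $a$ to $b$ means $a$ dominates $b$. An alternative phrasing avoids explicit induction by processing the vertices one at a time and maintaining a Hamilton path of the processed subtournament, inserting each new vertex by the same three-case rule, but the inductive formulation above is cleanest and is the form in which the result is used later (for instance alongside Theorem~\ref{stronghamilton}).
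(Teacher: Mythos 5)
Your argument is correct: it is the standard inductive insertion proof of R\'edei's theorem, and every step (the two boundary cases and the choice of the largest index $i$ with $v_iv$ an arc, forcing $vv_{i+1}$ to be an arc) checks out. The paper itself gives no proof of this statement---it is quoted as a classical result with a citation to R\'edei---so there is nothing to compare against; your proof is the usual one found in the literature.
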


\begin{theorem} \label{domset}
Let $T$ be a tournament of order $n\geq 3$. If $\gamma(T)> 1$, then $w(T)=\gamma(T)$ or $w(T)=\gamma(T) + 1.$
\end{theorem}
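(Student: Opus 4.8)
The plan is to prove both bounds: first the lower bound $w(T) \geq \gamma(T)$, then the upper bound $w(T) \leq \gamma(T) + 1$. The lower bound is essentially immediate: the vertex set of any closed dominating walk in $T$ is a dominating set, so a watchman's walk visits at least $\gamma(T)$ distinct vertices, and a closed walk of length $\ell$ visits at most $\ell$ vertices; hence $w(T) = \ell \geq \gamma(T)$. (The hypothesis $\gamma(T) > 1$ rules out the degenerate case where a single vertex dominates $T$ and the "walk" has length $0$, which would break even the statement.)

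For the upper bound, I would work inside the dominating strong component $T'$ of $T$. By Theorem~\ref{dominationcond}, $w(T) = w(T')$, and one checks that $\gamma(T) = \gamma(T')$ as well — every vertex outside $T'$ is dominated by all of $T'$ and dominates no vertex of $T'$, so a minimum dominating set of $T$ can be taken inside $T'$, and conversely a dominating set of $T'$ dominates all of $T$. So it suffices to bound $w(T')$ in terms of $\gamma(T')$ for the strong tournament $T'$. Let $S$ be a $\gamma$-set of $T'$ with $|S| = \gamma(T') = \gamma(T) =: \gamma$. The idea is to build a short closed dominating walk whose vertex set is exactly $S$ (giving $w(T') \le \gamma$) or $S$ together with one extra vertex (giving $w(T') \le \gamma + 1$). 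Since $T'$ is strong, consider the subtournament $T'[S]$ induced on $S$. By Theorem~\ref{hampath}, $T'[S]$ has a Hamilton path $v_1 v_2 \cdots v_\gamma$. If $v_\gamma v_1$ is an arc of $T'$ then $v_1 v_2 \cdots v_\gamma v_1$ is a closed dominating walk of length $\gamma$, so $w(T) \le \gamma$. Otherwise $v_1 v_\gamma$ is an arc; then since $T'$ is strong, there is some vertex $x$ of $T'$ with $v_\gamma x$ and $x v_1$ arcs (one must verify this: $\{v_\gamma\}$ does not dominate $T'$ because $\gamma > 1$, so there is a vertex $y$ with $y v_\gamma$ an arc; chasing the strong-connectivity / using that in a strong tournament every vertex lies on cycles of all lengths — Theorem~\ref{cycles} applied to $v_\gamma$ — one obtains such an intermediate $x$), and then $v_1 v_2 \cdots v_\gamma x v_1$ is a closed dominating walk of length $\gamma + 1$, giving $w(T) \le \gamma + 1$.

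The main obstacle is the final step: producing the single "connector" vertex $x$ that closes the Hamilton path of $T'[S]$ into a closed walk of length only $\gamma + 1$, rather than needing several extra vertices. The naive approach — take any directed $v_\gamma$-to-$v_1$ path in the strong tournament $T'$ — could be long. The cleanest fix is to invoke Theorem~\ref{cycles}: in the strong tournament $T'$ the vertex $v_1$ lies on a cycle of every length from $3$ up to $|V(T')|$; more useful is to argue directly that the arc from $v_\gamma$ back toward $v_1$ can be "patched" with at most one vertex. Concretely: if no single vertex $x$ has both $v_\gamma x$ and $x v_1$ as arcs, then every out-neighbour of $v_\gamma$ is also an out-neighbour of $v_1$ (i.e. beats by nothing into $v_1$), and combined with $v_1 v_\gamma$ being an arc this forces $N^+(v_\gamma) \cup \{v_\gamma\} \subseteq N^+(v_1)$, so $\{v_1\} \cup (S \setminus \{v_\gamma\})$ would dominate everything $S$ does — contradicting minimality of $\gamma$ unless this smaller set fails to dominate, which is exactly where one argues $v_\gamma$ itself is the unique out-neighbour issue and derives the contradiction with $|S| = \gamma$. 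I expect the write-up to spend most of its effort making this "one connector suffices" argument airtight, likely by a careful case analysis on whether $v_\gamma v_1$ is an arc and, if not, whether $v_\gamma$ has a common "in-between" vertex with $v_1$, using minimality of the dominating set $S$ to rule out the bad case.
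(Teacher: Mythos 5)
Your proposal is correct and follows essentially the same route as the paper: take a Hamilton path $v_1,\ldots,v_\gamma$ (Theorem~\ref{hampath}) on a minimum dominating set, close it directly into a cycle of length $\gamma$ when the back-arc exists, and otherwise close it through a single connector vertex in $N^+(v_\gamma)\cap N^-(v_1)$, whose existence is forced by minimality of the dominating set exactly as you argue (if that intersection were empty, $v_1$ would dominate everything $v_\gamma$ does and the set could be shrunk). The detour through the dominating strong component is harmless but unnecessary --- the paper runs the identical argument directly in $T$, and neither strong connectivity nor Theorem~\ref{cycles} is actually needed.
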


\begin{proof}
Let $T$ be a tournament of order $n$. Let $\gamma(T)=k$ for some $1 \leq k < n$. From Theorem \ref{hampath}, any tournament contains a Hamilton path. Suppose that there is some minimum dominating set $D=\{v_1,v_2,v_3,\ldots,v_k\}$ of $T$, with a Hamilton path $H=v_1,v_2,v_3,\ldots,v_k$ in the subtournament induced by $D$ such that $v_1 \in N^+_T(v_k)$. In this case $(v_k,v_1)$ is an arc, so $W=v_1,v_2$, $v_3,\ldots,v_k,v_1$ is a closed walk in $T$. Since $D$ was a minimum dominating set in $T$, and $W$ is a closed walk that uses exactly the vertices of $D$, $W$ is a watchman's walk for $T$. This walk has has length $k$. So, $w(T)=k=\gamma(T).$

Now suppose that every minimum dominating set in $T$ induces a subtournament that does not have a Hamilton cycle. It follows that we cannot construct a closed dominating walk in $T$ that is of length $\gamma(T)=k$, as this walk must use only the vertices of a minimum dominating set, and there are no closed walks using exactly the vertices of a minimum dominating set. This means that $w(T)>k=\gamma(T)$. Let $D=\{v_1,v_2,v_3,\ldots,v_k\}$ be a minimum dominating set in $T$, and let $P=v_1,v_2,v_3,\ldots,v_k$ be a Hamilton path in $H=T\lbrack D\rbrack $. Consider $ N^+_T(v_k) \cap N^-_T(v_1).$ If this is empty, every vertex outside of $D$ that is dominated by $v_k$ is also dominated by $v_1$, and $v_k$ is dominated by $v_1$. So, $D\backslash \{v_k\}$ is a dominating set, contradicting the minimality of $D$. So, there is some vertex $u$ in $T$, such that $u \not\in D$, and $u \in N^+_T(v_k) \cap N^-_T(v_1)$. Thus, $W=v_1,v_2,v_3,\ldots,v_k,u,v_1$ is a closed dominating walk of length $k+1=\gamma(T)+1$. Since $W$ has length $k+1$ it is a minimum closed dominating walk and so, $w(T)=\gamma(T)+1$. Therefore, for any tournament $T$, $w(T)=\gamma (T)$ or $\gamma (T) +1$. \end{proof}

In the tournament in Figure \ref{wwpaley7}, $\gamma(T)=3$, and $\{v_1,v_2,v_6\}$ is a minimum dominating set. Since $W=v_1,v_2,v_6,v_1$ is a closed walk through this set, $w(T)=3=\gamma(T)$. In the tournament in Figure~\ref{unique14}, $\gamma(T) = 2$, $\{v_1,v_5\}$ is minimum dominating set, and $W = v_1,v_2,v_5,v_1$ is watchman's walk, so $w(T) = 3 =\gamma(T) + 1$.

\begin{figure}[h]
    \centering
    \begin{subfigure}[b]{0.45\textwidth}\vspace*{5.4cm}
   \centering
    	\begin{tikzpicture}[transform canvas={scale=.6, yshift=4.6cm}]

    \foreach \x in {1,2,3,4,5,6,7}
    {\pgfmathparse{int(\x*360/7)}
    	\node[circle,draw, fill=black] (v\x) at (\pgfmathresult:4) {};
    	\node (l\x) at (\pgfmathresult:4.5) {\LARGE $v_{\x}$};}

    \foreach \x/\y/\a in {1/3/.535,1/5/.535,2/3/.535,2/4/.535,3/4/.535,3/5/.535,3/7/.535,4/5/.535,4/6/.535,5/6/.535,5/7/.535,6/7/.535}
    \draw[thick, decoration={markings,mark=at position \a with
    	{\arrow[scale=3,>=stealth]{>}}},postaction={decorate}] (v\x) to (v\y);

    \foreach \x/\y/\a in {1/4/.535,1/7/.535,2/5/.535,2/7/.535,3/6/.535,4/7/.535}
    \draw[thick, decoration={markings,mark=at position \a with
    	{\arrow[scale=3,>=stealth]{>}}},postaction={decorate}] (v\y) to (v\x);

    \foreach \x/\y/\a in {1/2/.535,6/1/.535,2/6/.535}
    \draw[thick, dashed, decoration={markings,mark=at position \a with
    	{\arrow[scale=3,>=stealth]{>}}},postaction={decorate}] (v\x) to (v\y);
    \end{tikzpicture}
    \caption{A tournament $T$ where \\ $w(T)=\gamma(T)$.}
    \label{wwpaley7}
    \end{subfigure}\hfill
    \begin{subfigure}[b]{0.45\textwidth}
   \centering
    	\begin{tikzpicture} [transform canvas={scale=.6, yshift=4.6cm}]

    \foreach \x in {1,2,3,4,5,6,7}
    {\pgfmathparse{int(\x*360/7)}
    	\node[circle,draw, fill=black] (v\x) at (\pgfmathresult:4) {};
    	\node (l\x) at (\pgfmathresult:4.5) {\LARGE $v_{\x}$};}

    \foreach \x/\y/\a in {1/2/.535,1/3/.535,1/4/.535,2/3/.535,2/4/.535,2/5/.535,3/4/.535,3/5/.535,3/6/.535,4/5/.535,4/6/.535,4/7/.535,5/6/.535,5/7/.535,6/7/.535}
    \draw[thick, decoration={markings,mark=at position \a with
    	{\arrow[scale=3,>=stealth]{>}}},postaction={decorate}] (v\x) to (v\y);

    \foreach \x/\y/\a in {1/5/.535,1/6/.535,1/7/.535,2/6/.535,2/7/.535,3/7/.535}
    \draw[thick, decoration={markings,mark=at position \a with
    	{\arrow[scale=3,>=stealth]{>}}},postaction={decorate}] (v\y) to (v\x);

    \end{tikzpicture}
    \caption{Tournament of order $7$ with $14$ watchman's walks}
    \label{unique14}
    \end{subfigure}
    \caption{Tournaments of order $7$.}
\end{figure}
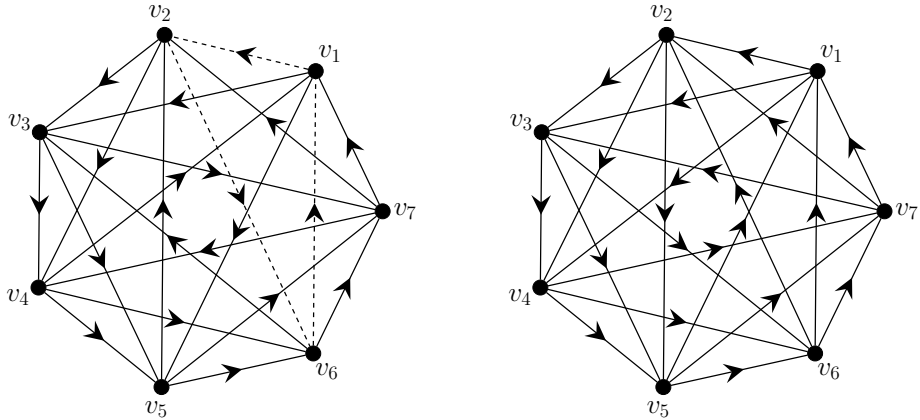

The result in Theorem \ref{domset} can also be extended to semicomplete digraphs.

\begin{corollary} \label{scdomset}
Let $D$ be a semicomplete digraph of order $n\geq 2$. If $\gamma(D)$ is the size of a minimum dominating set and $\gamma(D)> 1$, then $w(D)=\gamma(D)$ or $\gamma(D) + 1$.
\end{corollary}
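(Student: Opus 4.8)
The plan is to mimic the proof of Theorem~\ref{domset} essentially verbatim, checking at each step that the only properties actually used are ones semicomplete digraphs also enjoy. First I would recall the two facts about tournaments that drive the tournament argument: every tournament has a Hamilton path (Theorem~\ref{hampath}), and in a semicomplete digraph between any two vertices there is at least one arc, which is all that the domination/closed-walk manipulations really need. So I would begin by establishing the semicomplete analogue of the Hamilton-path fact, namely that every semicomplete digraph $D$ contains a Hamilton path. This follows because any tournament obtained from $D$ by deleting, for each pair joined by two arcs, one of those arcs is a spanning sub(di)graph that is a tournament, and a Hamilton path in that tournament is a Hamilton path in $D$.

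Next I would carry out the case analysis exactly as in Theorem~\ref{domset}. Let $D$ have order $n \geq 2$ and $\gamma(D) = k > 1$. Take a minimum dominating set $S = \{v_1,\dots,v_k\}$ and a Hamilton path $H = v_1,\dots,v_k$ in $D[S]$ (which exists by the previous paragraph applied to the semicomplete subdigraph $D[S]$). If some minimum dominating set admits a Hamilton cycle in its induced subdigraph — equivalently, if the Hamilton path can be closed by an arc $(v_k,v_1)$ — then that cycle is a closed walk using exactly the vertices of a minimum dominating set, hence a closed dominating walk of length $k$, forcing $w(D) = k = \gamma(D)$. Otherwise, no minimum dominating set induces a subdigraph with a Hamilton cycle, so no closed walk uses exactly the vertices of a minimum dominating set and $w(D) > k$. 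Then, as in the original proof, consider $N^+_D(v_k) \cap N^-_D(v_1)$: if it is empty then every out-neighbour of $v_k$ outside $S$ is also an out-neighbour of $v_1$ and $v_1$ dominates $v_k$ (here using that there is an arc between $v_1$ and $v_k$, and it is not $(v_k,v_1)$), so $S \setminus \{v_k\}$ dominates $D$, contradicting minimality; hence there is $u \notin S$ with $(v_k,u),(u,v_1) \in A$, and $W = v_1,\dots,v_k,u,v_1$ is a closed dominating walk of length $k+1$, so $w(D) = \gamma(D)+1$.

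The one point that needs slightly more care than in the tournament case — and the step I expect to be the main obstacle, modest as it is — is the sub-argument showing that when $N^+_D(v_k)\cap N^-_D(v_1)=\emptyset$ we can remove $v_k$ from the dominating set. In a tournament this is automatic because every vertex outside $S$ is joined to both $v_1$ and $v_k$ by exactly one arc; in a semicomplete digraph a vertex could have arcs in both directions to $v_1$, to $v_k$, or to both. I would argue that this only helps: a vertex $w \notin S$ dominated by $v_k$ (i.e. $(v_k,w)\in A$) that is \emph{not} dominated by $v_1$ has no arc $(v_1,w)$, so the only arc between $v_1$ and $w$ is $(w,v_1)$, putting $w \in N^+_D(v_k)\cap N^-_D(v_1)$, contrary to assumption; and $v_k$ itself is dominated by $v_1$ because the arc between them is not $(v_k,v_1)$ (else a Hamilton cycle exists), so it must be $(v_1,v_k)$ or both, in either case $(v_1,v_k)\in A$. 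Hence every vertex dominated by $v_k$ is dominated by $S\setminus\{v_k\}$, completing the contradiction. With this wrinkle handled, the rest transfers directly and the corollary follows.
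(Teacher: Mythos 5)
Your proof is correct and takes essentially the approach the paper intends: the paper gives no separate argument for this corollary, merely asserting that the proof of Theorem~\ref{domset} extends, and your write-up is precisely that extension, with the two points that actually need checking (a Hamilton path in $D[S]$ via a spanning sub-tournament, and the possibility of arcs in both directions when arguing that $S\setminus\{v_k\}$ would dominate) handled correctly.
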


From Theorem \ref{domset}, we get the following two corollaries.

\begin{corollary}\label{domsetrepeat}
If $T$ is a tournament, then no watchman's walk in $T$ repeats vertices.
\end{corollary}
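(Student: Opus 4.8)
The claim to prove is Corollary~\ref{domsetrepeat}: if $T$ is a tournament, then no watchman's walk in $T$ repeats vertices.

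The plan is to combine Corollary~\ref{condww}, which confines any watchman's walk to the dominating strong component $T'$, with Theorem~\ref{domset} (and its degenerate cases), which pins down the length $w(T)$ in terms of $\gamma$. First I would reduce to the strong case: by Theorem~\ref{dominationcond} we have $w(T)=w(T')$, and by Corollary~\ref{condww} every watchman's walk of $T$ is literally a watchman's walk of the strong tournament $T'$; moreover $T'$ dominating means $\gamma(T')=\gamma(T)$ (no outside vertex can help dominate $T'$, and $T'$ dominates everything). So it suffices to show a watchman's walk in a strong tournament repeats no vertex, and I would phrase the corollary's proof as: "By Corollary~\ref{condww} it suffices to consider a strong tournament, which we rename $T$."

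Next I would split on $\gamma(T)$. If $\gamma(T)=1$ then $T$ has a vertex $v$ with $\deg^+(v)=n-1$; in a tournament this forces $v$ to be a source, and a strong tournament on $\ge 2$ vertices has no source, so $n=1$ and $w(T)=0$, trivially non-repeating. If $\gamma(T)>1$, Theorem~\ref{domset} gives $w(T)\in\{\gamma(T),\gamma(T)+1\}$. Now suppose for contradiction that some watchman's walk $W=x_0,x_1,\dots,x_{\ell-1},x_0$ (with $\ell=w(T)$) repeats a vertex. The vertex set $S=V(W)$ is a dominating set (since $W$ dominates), so $|S|\ge\gamma(T)$; but a closed walk of length $\ell$ that repeats a vertex visits at most $\ell-1$ distinct vertices, so $|S|\le \ell-1 \le \gamma(T)$. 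Hence $|S|=\gamma(T)$, $\ell=\gamma(T)+1$, and $S$ is a minimum dominating set visited by a closed walk of length $|S|+1$ that uses every vertex of $S$ at least once. Since a closed walk of length $k+1$ on $k$ distinct vertices must use exactly one vertex twice and all others once, deleting that one repetition yields a closed spanning walk of $T[S]$ of length $k$, i.e. a closed walk through all $k$ vertices of $S$ using each exactly once — equivalently a Hamilton cycle of $T[S]$. But then $W'$ = that Hamilton cycle is a closed dominating walk of length $\gamma(T)<\ell=w(T)$, contradicting minimality of $W$.

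The main obstacle, and the step to state carefully, is the combinatorial bookkeeping in that last paragraph: extracting a Hamilton cycle of $T[S]$ from a length-$(k+1)$ closed walk that touches all $k$ vertices of $S$. The key observation is that such a walk has exactly $k+1$ edge-steps and visits $k$ vertices, so by pigeonhole some vertex is revisited, and because the total number of vertex-visits (counting the start once) equals $k+1$, exactly one vertex is visited twice and the rest once; the walk restricted to consecutive distinct visits then traces out a cycle through all of $S$. I would present this as: "A closed walk of length $k+1$ through exactly $k$ vertices, each used at least once, necessarily has the form of a $k$-cycle with one additional back-and-forth or one chord short-cut removed" — but it is cleaner just to note the walk yields a closed walk on all of $S$ of length $k$ after deleting the unique repeat, which is a Hamilton cycle of $T[S]$ since it is a closed walk of length $k$ on $k$ distinct vertices. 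Everything else is immediate from the cited results, so the write-up should be short.
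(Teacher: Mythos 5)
Your overall strategy is sound and the counting that forces $|S|=\gamma(T)$ and $\ell=\gamma(T)+1$ is correct, but the final step contains a genuine gap: you cannot ``delete the one repetition'' from the closed walk and still have a walk. If the repeated vertex $v$ occurs at positions $i<j$, the sequence obtained by removing the occurrence at position $j$ requires the arc $(x_{j-1},x_{j+1})$, which need not exist (in a tournament the arc between $x_{j-1}$ and $x_{j+1}$ may well point the wrong way). Structurally, a closed walk of length $k+1$ on $k$ distinct vertices with one vertex $v$ visited twice is not ``a $k$-cycle with a shortcut removed''; it is two cycles glued at $v$ (a figure-eight), and no local surgery on it produces a Hamilton cycle of $T[S]$. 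So the claimed Hamilton cycle, which is the engine of your contradiction, is not actually constructed.

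The gap is easy to repair with a tool already in the paper, and this is exactly what the paper's own proof does. Your length-$(k+1)$ closed walk lies entirely in $S$ and visits every vertex of $S$, so for any $x,y\in S$ one can follow the walk from an occurrence of $x$ to an occurrence of $y$; hence $T[S]$ is strongly connected. By Theorem~\ref{stronghamilton}, $T[S]$ then has a Hamilton cycle, which is a closed dominating walk of length $k=\gamma(T)<w(T)$, the desired contradiction. With that one-line substitution your argument goes through, and it is in fact a slightly cleaner packaging than the paper's: the paper instead splits into cases according to whether some minimum dominating set induces a Hamiltonian subtournament, and within the second case rules out a repeated vertex by the same ``repetition forces $T[S]$ strong, hence Hamiltonian'' observation. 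Your reduction to the dominating strong component via Corollary~\ref{condww} and the $\gamma(T)=1$ case are fine but not needed once the counting argument is in place.
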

 \begin{proof}
 Let $T$ be a tournament. Suppose there is some minimum dominating set $D$ such that the subtournament induced by $D$, $T\lbrack D \rbrack$, contains a Hamilton cycle. In this case, $w(T)=|D|=\gamma(T)$. This means that any watchman's walk must contain the vertices of a minimum dominating set, since no other set of size $\gamma(T)$ will be dominating, and any larger dominating set will not give us a closed walk of length $\gamma(T)$. Since any minimum dominating set contains only distinct vertices, each watchman's walk for $T$ does not repeat any vertices.

Now suppose that there is no minimum dominating set such that the subtournament it induces contains a Hamilton cycle. In this case, $w(T)=\gamma(T)+1$, as we have no closed dominating walk containing exactly the vertices of a minimum dominating set. Let $W$ be any watchman's walk of $T$, and let $D$ be the set of vertices in the walk. Since $D$ must be a dominating set of $T$, we have that $D$ is either a minimal dominating set of size $\gamma(T)+1$, or a minimum dominating set with an additional vertex. Note that it cannot be a dominating set of size $\gamma(T)$ with one of those vertices used twice as that would mean the dominating set was strongly connected and hence would have a Hamilton cycle. In the first case, each vertex must be distinct in the walk as the length of the walk equals the number of vertices. In the second case, each of the vertices from the dominating set occurs once and the additional vertex is from outside the dominating set, thus the length of the walk equals the number of vertices and the result holds. 
\end{proof}

\begin{corollary} \label{whenwTk}
If $T$ is a tournament, then $\gamma(T)=w(T)$ exactly when there exists some minimum dominating set $D$ in $T$ such that $T[D]$ is itself a strong tournament.
\end{corollary}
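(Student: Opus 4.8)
The plan is to prove the biconditional one implication at a time, using Theorem~\ref{domset} together with Theorem~\ref{stronghamilton} in the form ``a tournament is strong if and only if it has a Hamilton cycle.'' As in Theorem~\ref{domset}, I work under the standing assumption $\gamma(T)>1$, which is where this corollary lives.

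First I would dispatch the easy direction. Suppose $D$ is a minimum dominating set of $T$ with $T[D]$ a strong tournament. By Theorem~\ref{stronghamilton} the subtournament $T[D]$ has a Hamilton cycle $C$; its vertex set is exactly $D$, which dominates $T$, so $C$ is a closed dominating walk of $T$ of length $|D|=\gamma(T)$, giving $w(T)\le\gamma(T)$. For the reverse inequality, note that the vertex set of any closed dominating walk is a dominating set, and a closed walk of length $\ell$ meets at most $\ell$ distinct vertices; hence every closed dominating walk has length at least $\gamma(T)$, so $w(T)\ge\gamma(T)$. Combining, $w(T)=\gamma(T)$.

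For the converse, assume $w(T)=\gamma(T)$ and let $W$ be a watchman's walk. By Corollary~\ref{domsetrepeat}, $W$ repeats no vertex, so the number of distinct vertices on $W$ equals its length $\gamma(T)$; since this vertex set $D$ is dominating, it is a minimum dominating set. But then $W$ visits each of the $|D|$ vertices of $D$ exactly once and uses only arcs between vertices of $D$, i.e. arcs of $T[D]$, so $W$ is a Hamilton cycle of $T[D]$; by Theorem~\ref{stronghamilton}, $T[D]$ is a strong tournament, as required. (One could equally invoke the dichotomy inside the proof of Theorem~\ref{domset}: if no minimum dominating set induced a subtournament with a Hamilton cycle, that proof forces $w(T)=\gamma(T)+1$, contradicting $w(T)=\gamma(T)$.)

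I do not expect a genuine obstacle here: the corollary is essentially a repackaging of the case analysis already carried out for Theorem~\ref{domset}, the only real content being the elementary observation that a closed walk of length $\ell$ touches at most $\ell$ distinct vertices --- this is what makes every closed dominating walk have length at least $\gamma(T)$ and simultaneously forces a length-$\gamma(T)$ closed dominating walk to be vertex-distinct, hence a spanning cycle of its induced subtournament. The only fussy point is the excluded boundary case $\gamma(T)=1$, where a single dominating vertex makes $T[D]$ vacuously strong while the walk has length $0\neq 1$; this is the reason the statement should be read with the same $\gamma(T)>1$ hypothesis as Theorem~\ref{domset}.
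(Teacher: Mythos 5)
Your proof is correct, and the forward direction coincides with the paper's: a strong $T[D]$ has a Hamilton cycle by Theorem~\ref{stronghamilton}, which is a closed dominating walk of length $\gamma(T)$, and no closed dominating walk can be shorter since its vertex set must dominate. Where you diverge is the converse: the paper argues by contrapositive, showing that if \emph{no} minimum dominating set induces a strong subtournament then there is no dominating closed walk of length $\gamma(T)$, so Theorem~\ref{domset} forces $w(T)=\gamma(T)+1$. You instead argue directly, using Corollary~\ref{domsetrepeat} to conclude that a watchman's walk of length $\gamma(T)$ is vertex-distinct, so its vertex set is itself a minimum dominating set and the walk is a Hamilton cycle of the induced subtournament, which is therefore strong. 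Your route has the small advantage of actually exhibiting the witnessing set $D$ (it is the vertex set of any watchman's walk) rather than ruling out its nonexistence, at the cost of importing Corollary~\ref{domsetrepeat}; both rest on the same underlying dichotomy from Theorem~\ref{domset}, as you note parenthetically. Your remark about the boundary case $\gamma(T)=1$ (where $T[D]$ is vacuously strong but $w(T)=0$) is a legitimate observation: the corollary should indeed be read under the hypothesis $\gamma(T)>1$ carried over from Theorem~\ref{domset}, which the paper leaves implicit.
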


\begin{proof}
Let $T$ be a tournament such that $\gamma(T)=k$, and suppose that for some minimum dominating set $D$, $T\lbrack D \rbrack$ is strongly connected. It follows, by Theorem \ref{stronghamilton}, that $T\lbrack D \rbrack$ has a Hamilton cycle $H$ and this is also a closed dominating walk in $T$. By Theorem~\ref{domset}, since $w(T)\geq \gamma(T)$. This must be minimum.

Now suppose that for every minimum dominating set $D$ in $T$, $T\lbrack D \rbrack$ is not a strong subtournament. This means there is no Hamilton cycle in $T\lbrack D \rbrack$. So, there is no $k$-cycle in $T$ containing exactly the vertices of a minimum dominating set. Hence, any cycle of length $k$ does not dominate $T$. Thus, any closed walk of length $k$ is not dominating, and $w(T)\neq k$. By Theorem \ref{domset}, $w(T)=k$ or $k+1$, so in this case, $w(T)=k+1= \gamma(T)+1$. Therefore, $w(T)=\gamma(T)$ exactly when $T\lbrack D \rbrack$ is strong for some minimum dominating set $D$.
\end{proof}

The corollaries given of Theorem \ref{domset} can also be generalized to semicomplete digraphs. In particular, we have the following corollary, the proof of which can be generalized using Theorem \ref{semihamcycle}.

\begin{theorem}\em{\cite{semi}} \label{semihamcycle}
Every strongly connected semicomplete digraph has a Hamilton cycle.
\end{theorem}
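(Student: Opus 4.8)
The plan is to adapt the classical ``longest cycle'' argument for strong tournaments to the semicomplete setting, using only the defining property that between any two vertices there is at least one arc. First I would observe that a strongly connected semicomplete digraph $D$ on $n\geq 2$ vertices certainly contains a cycle (for any two vertices there are directed paths both ways, whose concatenation is a nontrivial closed walk and hence contains a cycle), so a longest cycle $C=v_1 v_2 \cdots v_k v_1$ exists. I would then assume for contradiction that $k<n$, so that the set $R=V(D)\setminus V(C)$ of off-cycle vertices is nonempty, and aim either to produce a cycle longer than $C$ or to contradict strong connectivity.

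The key local step is an insertion observation. For $w\in R$, if some consecutive pair $v_i,v_{i+1}$ on $C$ admits arcs $v_i\to w$ and $w\to v_{i+1}$, then replacing $v_i\to v_{i+1}$ by the detour $v_i\to w\to v_{i+1}$ gives a cycle of length $k+1$, contradicting maximality. So I would assume no $w\in R$ is insertable and extract structure by walking around $C$: if some $v_i\to w$, then non-insertability forces $w\not\to v_{i+1}$, and since $D$ is semicomplete this yields $v_{i+1}\to w$; iterating all the way around shows every vertex of $C$ dominates $w$ while $w$ dominates no vertex of $C$. The symmetric argument shows that if some $w\to v_i$, then $w$ dominates all of $C$ and is dominated by none of it. Since there is at least one arc between $w$ and $v_1$, each off-cycle vertex falls into exactly one of two classes: the set $S$ of vertices dominated by all of $C$, and the set $F$ of vertices dominating all of $C$.

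With $R=S\cup F$ I would conclude using strong connectivity. If $F=\emptyset$ then no off-cycle vertex has an arc into $C$, so $C$ is unreachable from any vertex of the nonempty set $S$, contradicting strong connectivity; symmetrically, $S=\emptyset$ leaves $F$ unreachable from $C$. Hence both $S$ and $F$ are nonempty. Because there are no arcs from $S$ to $C$ and none from $C$ to $F$, a shortest path from a vertex of $S$ to $V(C)$ (which exists by strong connectivity) must cross from $S$ into $F$ before reaching $C$, producing an arc $s\to f$ with $s\in S$ and $f\in F$. Then, since $v_k\to s$ (as $s\in S$) and $f\to v_1$ (as $f\in F$), the sequence $v_1 v_2 \cdots v_k\, s\, f\, v_1$ is a cycle of length $k+2$, contradicting the maximality of $C$. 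Therefore $k=n$ and $C$ is a Hamilton cycle.

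The main obstacle is the non-insertable case: once a vertex cannot be spliced into $C$, I must show it relates to the entire cycle uniformly (either dominating it or dominated by it), and then combine two such vertices of opposite type into a single length-two detour. The clean dichotomy $R=S\cup F$ is precisely where semicompleteness is used in place of completeness, and locating the crucial $S\to F$ arc is where strong connectivity does the essential work.
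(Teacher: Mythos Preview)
The paper does not prove this theorem; it is quoted from \cite{semi} as a known result and used as a tool (for Corollary~\ref{norepeats}). So there is no proof in the paper to compare against.

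Your argument is correct and is essentially the classical longest-cycle proof of Camion's theorem, carried over to the semicomplete setting. The only places where the tournament hypothesis is typically invoked are (i) ``no arc $w\to v_{i+1}$ implies an arc $v_{i+1}\to w$'' and (ii) ``there is some arc between $w$ and $v_1$'', and both hold verbatim in a semicomplete digraph. Your dichotomy $R=S\cup F$ is clean: once you show that a single arc $v_i\to w$ forces every $v_j\to w$ and simultaneously forbids every $w\to v_j$ (else $v_{j-1}\to w\to v_j$ is an insertion), the sets $S$ and $F$ are disjoint and exhaust $R$. The endgame---using strong connectivity to find an $S\to F$ arc and splicing in a length-two detour $v_k\,s\,f\,v_1$---is exactly right. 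One cosmetic remark: you may wish to note that the case $n=1$ is vacuous (or handled by convention), and that if $k=2$ the insertion/detour arguments still go through unchanged.
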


\begin{corollary} \label{norepeats}
If $T$ is a semicomplete digraph, then no watchman's walk repeats any vertices.
\end{corollary}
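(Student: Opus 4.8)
The plan is to mirror the argument used for tournaments in Corollary~\ref{domsetrepeat}, substituting the semicomplete analogues of the structural theorems that were invoked there. Recall that the tournament proof split into two cases according to whether some minimum dominating set induces a subdigraph with a Hamilton cycle; the only tournament-specific facts used were that a strong tournament has a Hamilton cycle (Theorem~\ref{stronghamilton}) and the dichotomy $w(T)\in\{\gamma(T),\gamma(T)+1\}$ (Theorem~\ref{domset}). For semicomplete digraphs these are replaced, respectively, by Theorem~\ref{semihamcycle} and by Corollary~\ref{scdomset}. So first I would fix a semicomplete digraph $D$ with $\gamma(D)=k$ and let $W$ be any watchman's walk of $D$, with $S$ the set of vertices appearing in $W$; note $S$ must be a dominating set, so $|S|\geq k$, and $W$ has length equal to $|S|$ plus the number of vertex repetitions.

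Next I would handle the case $\gamma(D)=1$ separately, since Corollary~\ref{scdomset} only speaks to $\gamma(D)>1$: if some vertex $v$ dominates all others, then $\{v\}$ is not strongly dominating on its own unless... actually here I would observe that a watchman's walk is a closed walk, so it has length at least... more carefully, if $\gamma(D)=1$ with dominating vertex $v$, any closed dominating walk through $v$ of minimum length is forced, and one checks directly it uses distinct vertices (it either is a single loop-free closed walk of length $w(D)$ whose vertex multiset has no repeats, or we derive a contradiction with minimality by the same private-neighbour shortcut argument as in Corollary~\ref{domsetrepeat}). Then for $\gamma(D)\geq 2$, apply Corollary~\ref{scdomset}: either $w(D)=\gamma(D)$, in which case $W$ has length $k$ and must use exactly the vertices of a minimum dominating set (no dominating set of size $<k$ exists, and a larger one cannot close up in $k$ steps), so all vertices of $W$ are distinct; or $w(D)=\gamma(D)+1$. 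In the latter case I would argue, exactly as in Corollary~\ref{domsetrepeat}, that $S$ cannot be a minimum dominating set of size $k$ with one vertex traversed twice: if it were, then $D[S]$ would be strongly connected (the walk $W$ visits all of $S$ and returns), hence by Theorem~\ref{semihamcycle} would contain a Hamilton cycle, giving a closed dominating walk of length $k$ and contradicting $w(D)=k+1$. Therefore $S$ has size exactly $k+1$ and $W$ has length $k+1=|S|$, so again no vertex is repeated.

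The main obstacle I anticipate is the $\gamma(D)=1$ edge case, which the tournament corollary sidestepped because Theorem~\ref{domset} was stated for $\gamma(T)>1$; I would need to confirm that a minimum closed dominating walk through a single universal vertex $v$ genuinely has no repeats, which follows because any repeat would let us excise a closed subwalk and still dominate (using that $v$'s out-neighbourhood covers everything), contradicting minimality — but this needs a careful statement since the excised portion must be shown to be a genuine closed subwalk. A secondary subtlety is verifying that Corollary~\ref{scdomset} and Theorem~\ref{semihamcycle} together really do force the two-case structure without hidden gaps (e.g., that "$D[S]$ strongly connected" is correctly deduced from "$W$ is a closed walk covering $S$"), but this is routine given the stated results.
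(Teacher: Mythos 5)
Your proposal is correct and follows exactly the route the paper intends: the paper gives no separate proof of Corollary~\ref{norepeats}, stating only that the argument of Corollary~\ref{domsetrepeat} generalizes by replacing Theorem~\ref{stronghamilton} with Theorem~\ref{semihamcycle} (and Theorem~\ref{domset} with Corollary~\ref{scdomset}), which is precisely what you do. The $\gamma(D)=1$ edge case you worry about is in fact trivial, since a dominating vertex gives $w(D)=0$ and the walk is a single vertex.
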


The cycle domination number of a digraph $D$,  $\gamma_{\text{cyc}}(D)$,  is the length of the shortest directed cycle $C$ in $D$ such that the vertices in $C$ form a dominating set in $D$. By Corollary~\ref{domsetrepeat}, a watchman's walk in a tournament never repeats a vertex and hence is a dominating cycle of minimum length. We get the following theorem.

\begin{theorem}
If $T$ is a tournament, then $\gamma_{\text{cyc}}(T)=w(T)$.
\end{theorem}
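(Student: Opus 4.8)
The plan is to prove equality by two inequalities, both of which are essentially immediate from results already established. For the inequality $\gamma_{\text{cyc}}(T) \leq w(T)$: by Corollary~\ref{domsetrepeat}, a watchman's walk $W$ in $T$ never repeats a vertex, and it is a closed walk, so $W$ is in fact a directed cycle whose vertex set dominates $T$. Hence $W$ is a dominating directed cycle of length $w(T)$, and since $\gamma_{\text{cyc}}(T)$ is the length of a \emph{shortest} such cycle, we get $\gamma_{\text{cyc}}(T) \leq w(T)$.

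For the reverse inequality $w(T) \leq \gamma_{\text{cyc}}(T)$: let $C$ be a shortest directed cycle in $T$ whose vertices form a dominating set, so $C$ has length $\gamma_{\text{cyc}}(T)$. Then $C$ is itself a closed dominating walk in $T$, so by definition $w(T) \leq |C| = \gamma_{\text{cyc}}(T)$. One technical point worth addressing up front is that $\gamma_{\text{cyc}}(T)$ is well-defined for every tournament, i.e.\ that some dominating directed cycle exists; this follows because Theorem~\ref{allww} guarantees $T$ has a watchman's walk, and the first paragraph shows that walk is a dominating directed cycle.

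Combining the two inequalities gives $\gamma_{\text{cyc}}(T) = w(T)$, completing the proof. The main (and only) obstacle is really just making sure the appeal to Corollary~\ref{domsetrepeat} is stated cleanly --- that a closed walk with no repeated vertices is a cycle, and that ``the vertices of the walk dominate $T$'' is exactly the condition in the definition of $\gamma_{\text{cyc}}$. There are no calculations and no case analysis; the content is entirely in recognizing that the non-repetition property already proved turns the two parameters into descriptions of the same minimization problem.

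\begin{proof}
Let $T$ be a tournament. By Theorem~\ref{allww}, $T$ has a watchman's walk $W$, and by Corollary~\ref{domsetrepeat}, $W$ repeats no vertices. A closed walk with no repeated vertices is a directed cycle, and the vertices of a dominating walk form a dominating set, so $W$ is a directed cycle whose vertices dominate $T$. In particular a dominating directed cycle exists, so $\gamma_{\text{cyc}}(T)$ is well-defined, and since $\gamma_{\text{cyc}}(T)$ is the length of a shortest such cycle, $\gamma_{\text{cyc}}(T) \leq |W| = w(T)$.

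Conversely, let $C$ be a directed cycle in $T$ of length $\gamma_{\text{cyc}}(T)$ whose vertices form a dominating set in $T$. Then $C$ is a closed dominating walk in $T$, so $w(T) \leq |C| = \gamma_{\text{cyc}}(T)$. Combining the two inequalities, $\gamma_{\text{cyc}}(T) = w(T)$.
\end{proof}
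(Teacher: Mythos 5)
Your proof is correct and follows essentially the same route as the paper: the paper justifies this theorem with the single observation that, by Corollary~\ref{domsetrepeat}, a watchman's walk in a tournament repeats no vertices and hence is a dominating cycle of minimum length. Your version merely makes the two inequalities and the well-definedness of $\gamma_{\text{cyc}}(T)$ explicit, which is a reasonable elaboration of the same argument.
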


The total domination number of a digraph $D$, denoted $\gamma_{\text{t}}(D)$, is the size of the smallest set $S$ such that for every vertex $v$ in $D$, there is a vertex $u\not= v$ in $S$ such that $(u,v)$ is an arc.

\begin{theorem}
If $T$ is a tournament, then $\gamma(T) \leq \gamma_{\text{t}}(T)\leq w(T)$.
\end{theorem}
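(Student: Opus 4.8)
The plan is to establish the two inequalities $\gamma(T) \leq \gamma_{\text{t}}(T)$ and $\gamma_{\text{t}}(T) \leq w(T)$ separately. The first inequality is essentially immediate: any total dominating set is in particular a dominating set (every vertex $v$ has an in-neighbour in $S$, and in fact even the vertices of $S$ are dominated by other vertices of $S$), so a minimum total dominating set has size at least $\gamma(T)$, giving $\gamma(T) \leq \gamma_{\text{t}}(T)$. I would state this in one sentence, noting that $\gamma_{\text{t}}(T)$ is well-defined for tournaments because, for instance, the vertex set of any watchman's walk (which exists by Theorem~\ref{allww}) is a total dominating set.

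For the second inequality, the key observation is that the vertex set $S$ of any watchman's walk $W$ is itself a total dominating set: since $W$ is a closed walk, every vertex $v$ appearing on $W$ has a predecessor $u$ on $W$ with $(u,v)$ an arc, and $W$ dominates all of $T$, so every vertex of $T$ has an in-neighbour among the vertices of $W$. Hence $\gamma_{\text{t}}(T) \leq |S|$. The remaining point is that $|S| = w(T)$, i.e.\ the length of the walk equals the number of distinct vertices it uses. This is exactly the content of Corollary~\ref{domsetrepeat}, which says no watchman's walk in a tournament repeats a vertex; so a watchman's walk of length $w(T)$ visits exactly $w(T)$ distinct vertices, and these form a total dominating set, yielding $\gamma_{\text{t}}(T) \leq w(T)$.

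The main obstacle — really the only subtlety — is making sure the "total domination" condition is verified correctly for the vertex set of $W$: one must check that every vertex of $T$, including those on $W$ itself, has an in-neighbour that is \emph{different from itself} inside $S$. For vertices on $W$ this follows from the closed-walk structure (the predecessor on $W$ is distinct since $T$ has no loops), and for vertices off $W$ it follows from $W$ being dominating together with the fact that the dominator lies on $W$. Once Corollary~\ref{domsetrepeat} is invoked to equate walk length with vertex count, the argument is complete; no case analysis on whether $w(T) = \gamma(T)$ or $\gamma(T)+1$ is needed, though one could alternatively phrase the proof through that dichotomy using Theorem~\ref{domset}.
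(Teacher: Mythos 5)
Your proposal is correct and follows essentially the same route as the paper: the first inequality is the observation that a total dominating set is a dominating set, and the second comes from noting that the vertex set of a watchman's walk is a total dominating set and invoking Corollary~\ref{domsetrepeat} to equate the walk's length with its number of distinct vertices. Your extra care in verifying the ``$u\neq v$'' condition for vertices on the walk is a slight refinement of the paper's wording but not a different argument.
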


\begin{proof}
Any minimum total dominating set must also be a dominating set, so $\gamma(T) \leq \gamma_{\text{t}}(T)$. If $W$ is a watchman's walk, each vertex in $W$ is dominated by at least one other vertex in the walk, and by Corollary \ref{domsetrepeat}, the vertices of $W$ are unique. The vertices of $W$ form a dominating set in $T$, so the vertices of $W$ are also a total dominating set in $T$. Therefore, $\gamma(T) \leq \gamma_{\text{t}}(T)\leq w(T)$.
\end{proof}
\
Note that the total domination number is not equal to either the domination number or the watchman number. In Figure~\ref{unique14},  $w(T)=\gamma_{\text{t}}(T)=3=\gamma(T)+1$. The smallest example the authors could find with $w(T)<\gamma_{\text{t}}(T)=\gamma(T)$ has $w(T)=5$, $\gamma_{\text{t}}(T)=\gamma(T)=4$, and has $272$ vertices.We give a construction for of such a tournament at the end of the next section.

\section{Small and fixed watchman number}

For some families of tournaments, particularly those that are highly structured or small order, the watchman number is fixed or bounded. The following theorem, from \cite{irredundance}, provides bounds on the domination number for small tournaments.

\begin{theorem} {\em\cite{irredundance}}\label{dom7}
If $T$ is a tournament on $n$ vertices, then $\gamma(T)\leq 2$ if $n<7$, and $\gamma(T)\leq 3$ if $n<19$.
\end{theorem}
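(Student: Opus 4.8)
The plan is to recast the statement via the classical property $S_k$: a tournament $T$ has property $S_k$ exactly when every set of $k$ vertices has a common ``beater'', i.e.\ a vertex that dominates all of them. In a tournament a set $S$ fails to be dominating precisely when some vertex outside $S$ beats every vertex of $S$, so $\gamma(T)\ge k+1$ if and only if $T$ has property $S_k$. Letting $f(k)$ denote the smallest order of a tournament with property $S_k$, the theorem is equivalent to the two inequalities $f(2)\ge 7$ and $f(3)\ge 19$.

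The first step is an \emph{in-neighbourhood lemma}: if $T$ is $S_k$ and $v$ is any vertex, then the subtournament $T[N^-(v)]$ is $S_{k-1}$. Indeed, given a $(k-1)$-set $A'\subseteq N^-(v)$, apply $S_k$ to the $k$-set $A'\cup\{v\}$ to obtain a vertex $w$ beating every vertex of $A'\cup\{v\}$; since $w$ beats $v$ we get $w\in N^-(v)$, and $w$ witnesses $A'$ inside $T[N^-(v)]$. Hence every vertex of an $S_k$ tournament has in-degree at least $f(k-1)$, and summing in-degrees yields $\binom{n}{2}\ge n\,f(k-1)$, i.e.\ $n\ge 2f(k-1)+1$; thus $f(k)\ge 2f(k-1)+1$. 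The base case is $f(1)=3$, because property $S_1$ merely says $\delta^-(T)\ge 1$, which fails for every tournament on one or two vertices but holds for the directed triangle. This already gives $f(2)\ge 7$, so $\gamma(T)\le 2$ whenever $n<7$, and it gives $f(3)\ge 15$. The bound $f(2)\ge 7$ is sharp --- there is an $S_2$ tournament on $7$ vertices --- which is precisely why this recursion stalls and does not by itself reach $19$.

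The remaining task, and the one I expect to be the main obstacle, is to close the gap $15\le n\le 18$: rule out any $S_3$ tournament on $15,16,17$, or $18$ vertices. A naive double count of triples against witnesses is far too lossy here, so instead I would exploit the near-regularity forced by $\delta^-(T)\ge f(2)=7$. For $n=15$ the equality $\binom{15}{2}=15\cdot 7$ forces $T$ to be regular, with \emph{every} in-neighbourhood inducing a copy of the (up to isomorphism unique) $7$-vertex $S_2$ tournament $QR_7$; for $n\in\{16,17,18\}$ one still has $\delta^-(T)\ge 7$ together with the in-neighbourhood lemma applied at every vertex. The idea is then to use the rigidity of $QR_7$ and the forced overlaps of these in-neighbourhoods (as $v$ ranges over $V(T)$) to produce a contradiction --- a bounded structural case analysis of the sort carried out in \cite{irredundance}, to which I would defer this final step.
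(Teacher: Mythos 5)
The paper itself offers no proof of this statement---it is quoted verbatim from the reference of Reid et al.---so there is no internal argument to compare yours against. Judged on its own terms, your proposal is correct and essentially complete for the \emph{first} assertion: the translation $\gamma(T)\ge k+1$ if and only if $T$ has property $S_k$, the in-neighbourhood lemma showing $T[N^-(v)]$ is $S_{k-1}$, the degree-sum recursion $f(k)\ge 2f(k-1)+1$, and the base case $f(1)=3$ are all sound, and together they yield $f(2)\ge 7$, hence $\gamma(T)\le 2$ whenever $n<7$.

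The second assertion, however, is not proved. Your recursion only reaches $f(3)\ge 15$, and you explicitly flag the range $15\le n\le 18$ as ``the main obstacle'' before deferring it to a structural case analysis that you do not carry out. That deferral is precisely the hard half of the theorem: excluding an $S_3$ tournament on $15$ through $18$ vertices is where all the work lies, and the sketch you give does not yet amount to an argument. In particular, the appeal to the rigidity of the $7$-vertex tournament $QR_7$ only bites cleanly at $n=15$, where regularity forces every in-neighbourhood to have exactly $7$ vertices; for $n=16,17,18$ some in-degrees exceed $7$, the in-neighbourhood lemma only tells you those larger in-neighbourhoods induce $S_2$ tournaments, and no contradiction is extracted. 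The standard way to reach $19$ without an extensive case analysis is a sharper weighted counting argument (the Szekeres--Szekeres bound $f(k)\ge (k+2)2^{k-1}-1$, which gives $19$ directly at $k=3$); either that refinement or the full structural analysis must actually be supplied before the claim for $n<19$ can be considered established.
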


By using these upper bounds on the domination number of small tournaments, we get the following upper bounds on the watchman's walk number.

\begin{corollary} \label{bound7}
If $T$ is a tournament on $n$ vertices, then $w(T)\leq 3$ if $n<7$, and $w(T)\leq 4$ if $n<19$.
\end{corollary}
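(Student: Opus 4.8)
The plan is to combine Theorem~\ref{domset} with Theorem~\ref{dom7} directly. Both statements concern tournaments on $n$ vertices, and the bridge between domination number and watchman number is already established: Theorem~\ref{domset} tells us that whenever $\gamma(T) > 1$ we have $w(T) \leq \gamma(T) + 1$. So for $n < 7$, Theorem~\ref{dom7} gives $\gamma(T) \leq 2$, and hence $w(T) \leq \gamma(T) + 1 \leq 3$; similarly for $n < 19$, we get $\gamma(T) \leq 3$ and hence $w(T) \leq 4$.

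The one wrinkle to handle carefully is the hypothesis $\gamma(T) > 1$ in Theorem~\ref{domset}. If $\gamma(T) = 1$, that theorem does not apply, so I would dispatch this case separately: a tournament with $\gamma(T) = 1$ has a vertex $v$ dominating all others, i.e. $\deg^+(v) = n - 1$. Then $v$ is the unique source, and $v$ together with any in-neighbour (if $n \geq 2$, there is a vertex $u$ with $u$ dominated by $v$; and since the tournament is... actually one needs an arc back) — more carefully, pick any out-neighbour $u$ of $v$; if some vertex $w$ has an arc to $v$ then... Actually the cleanest observation is: if $\gamma(T)=1$ and $n\geq 2$, take $v$ with $\deg^+(v)=n-1$ and any other vertex $u$; then $u$ dominates everything except possibly $v$, but $v$ dominates $u$, so $\{u,v\}$ need not be closed. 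Rather than belabor this, I would note $w(T) \leq 3$ follows from Theorem~\ref{nminustwo} and its tournament extension for $n \geq 5$, and check the tiny cases $n \leq 4$ by hand (for $n \le 3$ a strong tournament is a $3$-cycle giving $w=3$; transitive tournaments on $\leq 4$ vertices have a dominating strong component that is a single vertex or a $3$-cycle, so $w \leq 3$). In all cases $w(T) \leq 3$, which is consistent with the claimed bound.

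I expect essentially no obstacle here: the corollary is a routine consequence of results already proved. The only thing requiring a moment's care is making sure the $\gamma(T) = 1$ boundary case is covered, since Theorem~\ref{domset} explicitly excludes it; but even there the bound $w(T) \leq 3 \leq 4$ holds trivially once one observes a dominating vertex yields a closed dominating walk of length at most $3$ through the dominating strong component (which has order $1$ or at least $3$, being strong). So the proof is: invoke Theorem~\ref{dom7} to bound $\gamma(T)$, invoke Theorem~\ref{domset} (in the form extended to all tournaments, using $w(T) \leq \gamma(T)+1$) to bound $w(T)$, and handle $\gamma(T) = 1$ separately with the trivial bound.
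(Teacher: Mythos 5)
Your proposal is correct and matches the paper's (unstated but clearly intended) argument: combine Theorem~\ref{dom7} with the bound $w(T)\leq\gamma(T)+1$ from Theorem~\ref{domset}. Your handling of the $\gamma(T)=1$ boundary case is more laboured than necessary --- a dominating vertex is by itself a closed dominating walk of length $0$, so $w(T)=0$ there --- but the conclusion is right either way.
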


We know that any watchman's walk for a tournament will be completely contained in the dominating strong component. From this, it follows that we can also apply these upper bounds to some larger tournaments.

\begin{corollary}
If $T$ is a tournament such that the dominating strong component contains fewer than $7$ vertices, then $w(T)\leq 3$.
\end{corollary}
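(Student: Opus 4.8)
The plan is to chain together three results already established in the excerpt. First, recall from Corollary~\ref{condww} (and Theorem~\ref{dominationcond}) that for any tournament $T$ with dominating strong component $T'$, every watchman's walk of $T$ lies entirely in $T'$ and $w(T)=w(T')$. So it suffices to bound $w(T')$.

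Next, since $T'$ is the dominating strong component it is a strong tournament, and by hypothesis its order is $n'<7$ where $n'=|V(T')|$. Now apply Corollary~\ref{bound7} to the tournament $T'$: a tournament on fewer than $7$ vertices has watchman number at most $3$ (this is the $n<7$ case of that corollary, which itself came from the domination bound $\gamma \le 2$ in Theorem~\ref{dom7} together with Theorem~\ref{domset}). Hence $w(T') \le 3$.

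Combining the two displayed facts gives $w(T) = w(T') \le 3$, as required. I would write this out in roughly three sentences: invoke Theorem~\ref{dominationcond}/Corollary~\ref{condww} to reduce to the dominating strong component, invoke Corollary~\ref{bound7} on that component, and conclude.

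I do not anticipate a real obstacle here, since every ingredient is already proved; the only thing to be careful about is the degenerate case where $T$ is itself strong (so $T'=T$ and $T$ has fewer than $7$ vertices), but then Corollary~\ref{bound7} applies directly and the argument goes through unchanged. One might also remark that the bound cannot be improved in general for the same reason it is sharp in Corollary~\ref{bound7} — a strong tournament on $5$ vertices with no source vertex has $w \ge 3$ — but that observation is optional.

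\begin{proof}
Let $T'$ be the dominating strong component of $T$. By Theorem~\ref{dominationcond}, $w(T)=w(T')$, and by Corollary~\ref{condww} every watchman's walk of $T$ is contained in $T'$. Since $T'$ is a tournament on fewer than $7$ vertices, Corollary~\ref{bound7} gives $w(T')\leq 3$. Therefore $w(T)=w(T')\leq 3$.
\end{proof}
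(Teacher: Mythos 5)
Your proof is correct and matches the paper's argument: both reduce to the dominating strong component via Theorem~\ref{dominationcond} and then apply the $n<7$ bound from Corollary~\ref{bound7} to that component. The paper merely separates the strong and non-strong cases explicitly, which you also noted as the only degenerate case to check.
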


\begin{proof}
Let $T$ be a tournament with the dominating strong component containing fewer than $7$ vertices. If $T$ is strongly connected, then $T$ has exactly one strongly connected component, so $|V|<7$, and the result follows from Corollary \ref{bound7}. Now suppose that $T$ is not strongly connected. That is, $T$ contains at least two maximal strong components. Since $T$ is a tournament, by Theorem \ref{dominationcond}, $w(T)=w(T')$, where $T'$ is the dominating strong component. Since $T'$ contains less than seven vertices, $w(T')\leq 3$. Therefore $w(T)\leq 3.$
\end{proof}

Similarly, from the second upper bound, we get the following corollary.

\begin{corollary}
If $T$ is a tournament such that the dominating strong component contains fewer than $19$ vertices, then $w(T)\leq 4$.
\end{corollary}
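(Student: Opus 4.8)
The plan is to mirror exactly the argument just used for the ``fewer than $7$ vertices'' corollary, simply swapping in the second bound from Corollary~\ref{bound7}. So I would split into two cases according to whether $T$ is strongly connected.

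First, suppose $T$ is strongly connected. Then $T$ has a single maximal strong component, namely $T$ itself, so the dominating strong component is $T$. The hypothesis then says $|V(T)| < 19$, and since $T$ is a tournament on fewer than $19$ vertices, Corollary~\ref{bound7} gives $w(T) \le 4$ directly.

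Next, suppose $T$ is not strongly connected, so it has at least two maximal strong components. Let $T'$ denote the dominating strong component. By Theorem~\ref{dominationcond} we have $w(T) = w(T')$. Now $T'$ is an induced subdigraph of a tournament, hence is itself a tournament, and by hypothesis it has fewer than $19$ vertices; applying Corollary~\ref{bound7} to $T'$ yields $w(T') \le 4$, and therefore $w(T) \le 4$. Combining the two cases gives the result.

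There is essentially no obstacle here: all the real content is already packaged in Theorem~\ref{dominationcond} (which localizes the watchman's walk inside the dominating strong component) and in Corollary~\ref{bound7} (the domination-based bound for small tournaments). The only point that needs a word of care is noting that the dominating strong component, being an induced subdigraph of a tournament, is again a tournament, so that Corollary~\ref{bound7} is applicable to it.
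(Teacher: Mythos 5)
Your proof is correct and follows exactly the argument the paper intends: the paper omits the proof, saying only ``Similarly, from the second upper bound, we get the following corollary,'' meaning the same two-case argument (strong vs.\ not strong, using Theorem~\ref{dominationcond} and the $n<19$ bound of Corollary~\ref{bound7}) that you wrote out. Your added remark that the dominating strong component is itself a tournament is a reasonable point of care that the paper leaves implicit.
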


From \cite{exactlyk}, we know that there exists a tournament with domination number $k$ for any arbitrarily large value of $k$. This paper gives a construction, which when given the inheritance graph of a tournament with domination number larger than $k$, and any tournament $X$ such that $X-M$ dominates $M$ for a specified subset $M$, produces a tournament with domination number $k$ and $|M|$ minimum dominating sets. For many uses and constructions, it is more helpful to have a strong tournament over one that is not strong. While \cite{exactlyk} gives us a tournament $T$ with domination number $k$, this theorem does not guarantee to give a strong tournament. We give the following construction to build a strong tournament from any tournament that is not already strong, while preserving the dominating number. This can be used to obtain a strong tournament with domination number $k$ for any integer $k$.

\begin{Lemma}\label{weaktostrong}
Let $T$ be any tournament of order $n$ that is not strongly connected. If $\gamma (T) = k \geq 3$, then we can construct a strongly connected tournament, $T_s$, of order $n+1$ such that $\gamma(T_s)=k$ and $T$ is a subtournament of $T_s$.
\end{Lemma}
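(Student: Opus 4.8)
The plan is to build $T_s$ by adjoining a single new vertex $v$ to $T$ and orienting all the new arcs so that $v$ has \emph{exactly one} out-neighbour. First I would recall the structure of a non-strong tournament: by Theorem~\ref{condtransitive} the condensation of $T$ is transitive, so the maximal strong components can be listed $T_1,T_2,\ldots,T_m$ with $m\ge 2$ and with every arc between $T_i$ and $T_j$ directed from $T_i$ to $T_j$ whenever $i<j$; here $T_1$ is the dominating strong component, and in particular every vertex of $T_1$ dominates every vertex of $T_2,\ldots,T_m$. Now fix any vertex $w\in V(T_1)$, adjoin a new vertex $v$, declare $(v,w)$ an arc, and declare $(x,v)$ an arc for every $x\in V(T)\setminus\{w\}$. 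This specifies exactly one arc between $v$ and each vertex of $T$, so $T_s$ is a tournament of order $n+1$ containing $T$ as a subtournament.

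Second I would check that $T_s$ is strongly connected. From $v$ one reaches everything: $v\to w$, the component $T_1$ is strong so $w$ reaches all of $V(T_1)$, and $w\in V(T_1)$ dominates every vertex of $T_2,\ldots,T_m$ directly. Conversely everything reaches $v$: any vertex of a component $T_i$ with $i<m$ dominates all of $T_m$, every vertex of $T_m$ has an arc to $v$ (since $V(T_m)\subseteq V(T)\setminus\{w\}$ as $w\in V(T_1)$ and $m\ge 2$), and $w$ itself reaches $T_m$ and hence $v$. So $T_s$ is strong.

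Third, for the upper bound $\gamma(T_s)\le k$ I would take a minimum dominating set $D$ of $T$: it has size $k\ge 3$, so it contains some $x\ne w$, whence $x\to v$ and $D$ dominates $v$ as well as all of $V(T)$, i.e.\ $D$ dominates $T_s$. For the lower bound $\gamma(T_s)\ge k$, let $D_s$ be any dominating set of $T_s$. If $v\notin D_s$ then $D_s\subseteq V(T)$ dominates $V(T_s)\supseteq V(T)$, so $|D_s|\ge\gamma(T)=k$. If $v\in D_s$, write $D_s=\{v\}\cup D'$ with $D'\subseteq V(T)$; since the only out-neighbour of $v$ in $T_s$ is $w$, every vertex of $V(T)\setminus\{w\}$ must be dominated by $D'$ through in-neighbours lying in $T$, so $D'\cup\{w\}$ is a dominating set of $T$ and $|D_s|=|D'|+1\ge|D'\cup\{w\}|\ge\gamma(T)=k$. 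Either way $|D_s|\ge k$, so $\gamma(T_s)=k$.

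The main obstacle is precisely this lower bound: one must prevent the new vertex from being exploited to shrink a dominating set. Giving $v$ a large out-neighbourhood (say all of $V(T_1)$) would wreck $\gamma$, because then $v$ alone could replace many vertices; restricting $v$ to the single out-neighbour $w$ is exactly the reach needed to close the transitive condensation up into a strong tournament while still allowing $v$ to substitute for at most one vertex of a dominating set, which is what makes the ``$D'\cup\{w\}$'' step go through. The remaining details to handle carefully are that $w$ is chosen inside the dominating strong component $T_1$ (so the routing for strong connectivity is available) and that the hypotheses $m\ge 2$ and $k\ge 3$ are invoked only where stated.
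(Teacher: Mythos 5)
Your construction is essentially the paper's: both adjoin a single new vertex having exactly one out-neighbour and receiving arcs from every other vertex, and both establish the lower bound $\gamma(T_s)\geq k$ by the same swap argument (if the new vertex is in a dominating set, delete it and insert its unique out-neighbour to get a dominating set of $T$). The only difference is in how strong connectivity is certified --- the paper takes the out-neighbour to be the start of a Hamilton path of $T$ and exhibits a Hamilton cycle of $T_s$, while you take it inside the dominating strong component and route through the transitive condensation --- and both verifications are correct.
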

\begin{proof}
Let $T=(V,A)$ be a tournament that is not strongly connected. Since $T$ is a tournament, we known from Theorem \ref{hampath} that $T$ has at least one Hamilton path. Let $H=u,v_1,v_2, \ldots, v_{n-1}$ be a Hamilton path in $T$. Consider the tournament $T_s$, where $V(T_s)=V(T) \cup \{w\}$, and $A(T_s)=A(T) \cup \{(x,w)| x\in V(T)\backslash\{u\}\} \cup \{(w,u)\}$ for some $w \not \in V(T)$. It follows that $H_s=u,v_1,v_2,\ldots,$ $v_{n-1},$ $w,u$ is a closed walk in $T_s$. Since $H$ uses each vertex in $T$ exactly once, $H_s$ uses each vertex in $T_s$ exactly once, except $u$, which is both the start-vertex and end-vertex of the walk. Thus, $H_s$ is a Hamilton cycle in $T_s$. Hence, by Theorem \ref{stronghamilton}, $T_s$ is strongly connected.

Now suppose $D$ is a minimum dominating set in $T$. Since $\gamma(T)=k\geq 3$, we know $|D|=k\geq 3$. As $w$ is dominated in $T_s$ by all but one vertex from $T$, at least two vertices in $D$ dominate $w$ in $T_s$. Also, $A(T)\subset A(T_s)$, so $D$ must also be a dominating set in $T_s$. Thus, $\gamma(T_s)\leq k$.

Consider a minimum dominating set $D_s$ in $T_s$. We know $|D_s|\leq k$. Suppose to the contrary that $|D_s|<k$. Suppose that $D_s \subset V(T)$. Since $A(T)\subset A(T_s)$, $D_s$ would be a dominating set in $T$. However, $|D_s|\leq k-1$, so $D_s$ doesn't dominate $T$. Thus, it also does not dominate $T_s$. This means that $D_s \not\subset V(T)$ and $w \in D_s$. By the construction of $T_s$, vertex $w$ only dominates $w$ and $u$ in $T_s$. So, $D_s\backslash \{w\}$ must dominate $T\backslash \{u\}$. It follows that $D_s\backslash \{w\} \cup \{u\}$ must dominate $T$. However, $|D_s\backslash \{w\} \cup \{u\}|\leq k-1$. This contradicts the fact that $\gamma(T)=k$. Thus, $\gamma(T_s)\geq k$. Therefore $\gamma(T_s)=k$. \end{proof}

\begin{theorem} \label{strongforanyk}
For every integer $k \geq 2$, there exists a strong tournament $T$ such that $\gamma(T)=k$.
\end{theorem}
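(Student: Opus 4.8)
The plan is to dispatch the bulk of the statement, the range $k \geq 3$, using machinery already in place, and then to handle the single remaining case $k = 2$ by an explicit small example.

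First, suppose $k \geq 3$. By the construction of \cite{exactlyk}, there is a tournament $T$ with $\gamma(T) = k$. If this $T$ happens to be strongly connected, then $T$ itself is the required tournament and we are done. If $T$ is not strongly connected, then it is a tournament that is not strong and has $\gamma(T) = k \geq 3$, so Lemma~\ref{weaktostrong} applies directly: it produces a strongly connected tournament $T_s$ of order $|V(T)| + 1$ with $\gamma(T_s) = k$, which is exactly what we want.

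It remains to treat $k = 2$, which falls outside the hypothesis $k \geq 3$ of Lemma~\ref{weaktostrong}, so here I would just exhibit one explicit strong tournament of domination number $2$. A convenient choice is the rotational tournament $R_5$ on vertex set $\mathbb{Z}_5$ with arc set $\{(i, i+1), (i, i+2) : i \in \mathbb{Z}_5\}$. It is strongly connected since $0 \to 1 \to 2 \to 3 \to 4 \to 0$ is a Hamilton cycle (or by Theorem~\ref{stronghamilton}). It has no dominating vertex, because a dominating vertex in an $n$-tournament needs out-degree $n-1$, whereas every vertex of $R_5$ has out-degree $2 < 4$; hence $\gamma(R_5) \geq 2$. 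On the other hand $R_5$ has fewer than $7$ vertices, so Theorem~\ref{dom7} gives $\gamma(R_5) \leq 2$. Therefore $\gamma(R_5) = 2$ (one can also simply check that $\{0,3\}$ is a dominating set). Combining the two cases proves the theorem.

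The main obstacle here is really organizational rather than technical: the genuine difficulty has already been absorbed into Lemma~\ref{weaktostrong} and the construction of \cite{exactlyk}, so the only point requiring care is that Lemma~\ref{weaktostrong} is available only for $k \geq 3$, which forces the separate base case $k = 2$; for that case one must choose a starting tournament that is simultaneously strongly connected and free of a dominating vertex, so that the upper bound of Theorem~\ref{dom7} and the lower bound coming from the absence of a dominating vertex together pin $\gamma$ down to exactly $2$.
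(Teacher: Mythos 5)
Your proof is correct and follows essentially the same route as the paper: invoke the construction of \cite{exactlyk} for $k\geq 3$, repair non-strongness via Lemma~\ref{weaktostrong}, and settle $k=2$ by an explicit example. The only (immaterial) difference is your choice of base case --- the paper uses the directed $3$-cycle, which is a smaller and simpler witness than $R_5$ and needs no appeal to Theorem~\ref{dom7}.
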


\begin{proof}
Let $k=2$. The directed $3$-cycle is a strong tournament of order $3$ with $\gamma(T)=2$. Now let $k$ be any integer such that $k\geq 3$. From \cite{exactlyk}, 
there exists a tournament $T$ such that $\gamma(T)=k$. If $T$ is strongly connected, we have the desired tournament. However, if $T$ is not strong, by Theorem \ref{weaktostrong}, there exists a tournament $T_s$ from $T$ such that $T$ is strong and $\gamma(T_s)=k$ as required.
\end{proof}

We now use the above, along with the construction in \cite{exactlyk}, to obtain a tournament with any watchman number greater than $3$.

\begin{theorem}\label{wwanyk}
If $k\geq 3$, then there exists a tournament $T$ such that $w(T)=k$.
\end{theorem}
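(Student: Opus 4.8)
The plan is to combine Theorem~\ref{domset} (which says $w(T) \in \{\gamma(T), \gamma(T)+1\}$ for any tournament with $\gamma(T) > 1$) with the constructions referenced from \cite{exactlyk} and with Lemma~\ref{weaktostrong} to realize every target value $k \geq 3$ as a watchman number. The key tension is that Theorem~\ref{domset} only pins $w(T)$ to within one of $\gamma(T)$, so to hit a specific $k$ I need control over which of the two values occurs. Corollary~\ref{whenwTk} resolves this: $w(T) = \gamma(T)$ precisely when some minimum dominating set induces a strong subtournament, and otherwise $w(T) = \gamma(T) + 1$. So the strategy splits into producing, for a given $k$, a tournament whose minimum dominating set behaviour is forced one way or the other.

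First I would handle the ``easy'' realization $w(T) = k$ via $\gamma(T) = k$: by Theorem~\ref{strongforanyk} there is a strong tournament $T_0$ with $\gamma(T_0) = k$ for every $k \geq 2$. If some minimum dominating set of $T_0$ induces a strong subtournament, then $w(T_0) = k$ by Corollary~\ref{whenwTk} and we are done. The delicate case is when no minimum dominating set of the tournament handed to us induces a strong subtournament --- then that tournament has watchman number $k+1$, not $k$. Rather than fight this, I would turn it around: if I can produce, for each $k \geq 3$, \emph{some} tournament $T$ with $\gamma(T) = k-1$ and with the property that \emph{every} minimum dominating set induces a non-strong (indeed Hamilton-cycle-free) subtournament, then $w(T) = (k-1) + 1 = k$ by Theorem~\ref{domset} and Corollary~\ref{whenwTk}. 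Combining the two cases covers all $k \geq 3$: use $\gamma = k$ when a strong minimum dominating set is available, and the ``$+1$'' mechanism with $\gamma = k-1$ otherwise.

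The main obstacle is exhibiting a tournament in which every minimum dominating set fails to induce a strong subtournament, with full control over $\gamma$. Here I would lean on the construction from \cite{exactlyk}, which takes an inheritance graph and a ``small'' tournament $X$ with a prescribed dominated subset $M$ and yields a tournament with domination number $k$ and exactly $|M|$ minimum dominating sets. By choosing $X$ (and hence the structure of each minimum dominating set) so that the induced subtournament on each minimum dominating set has a vertex of out-degree $0$ or in-degree $0$ within that set --- e.g.\ arranging one vertex of the dominating set to beat all the others, or to lose to all the others --- I can guarantee no Hamilton cycle there, forcing $w = \gamma + 1$. If \cite{exactlyk} as stated does not directly give control over the internal arc structure of the minimum dominating sets, I would instead build a small gadget tournament of fixed order with $\gamma = 2$ and no strong minimum dominating set (so $w = 3$) and then inflate $\gamma$ while preserving the ``no strong minimum dominating set'' property --- replacing a dominating vertex by a transitive blow-up, or adding dominators in a transitive chain --- and finally apply Lemma~\ref{weaktostrong} if a strong tournament is wanted. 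Once the realization is established for both parities of the Corollary~\ref{whenwTk} dichotomy, the theorem follows for all $k \geq 3$; I would close by noting that $w(T) \leq 2$ is not attainable for tournaments without a source (as already remarked after Corollary~\ref{bound7}), so $k \geq 3$ is the right range.
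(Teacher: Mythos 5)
Your ingredients are the right ones (Theorem~\ref{domset}, Corollary~\ref{whenwTk}, Theorem~\ref{strongforanyk}, and the construction of \cite{exactlyk}), and they match what the paper intends --- the paper in fact states Theorem~\ref{wwanyk} without a written proof, deferring entirely to the \cite{exactlyk} construction. But as written your argument has a genuine gap: your two cases do not form a dichotomy. For a fixed $k$, if the particular strong tournament $T_0$ with $\gamma(T_0)=k$ supplied by Theorem~\ref{strongforanyk} happens to have no strong minimum dominating set, nothing you have said guarantees that a tournament with $\gamma=k-1$ and \emph{no} strong minimum dominating set exists either; you could in principle land in the bad situation where $w(T_0)=k+1$ while every candidate with $\gamma=k-1$ has watchman number $k-1$. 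You acknowledge this and propose to force the second branch, but the proposed mechanisms are unverified and in places suspect: ``adding dominators in a transitive chain'' tends to create a dominating vertex and collapse $\gamma$ to $1$, and ``replacing a dominating vertex by a transitive blow-up'' is not shown to preserve $\gamma$ or the non-strongness of every minimum dominating set. So neither branch is actually established for general $k$.

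The clean repair is to make your \emph{first} branch constructive rather than the second, which is what the paper does. The construction of \cite{exactlyk} lets you prescribe the tournament $X$ whose copies appear as the minimum dominating sets of the output tournament $T$ (this is exactly the control the paper exploits at the end of Section 4, where $X$ is chosen to be a $3$-cycle with a sink to force $w(T)=\gamma(T)+1$). Choose $X$ to be a \emph{strong} tournament on $k$ vertices (these exist for all $k\geq 3$, e.g.\ by Theorem~\ref{cycles} or by taking a rotational tournament); then $\gamma(T)=k$ and some minimum dominating set induces a strong subtournament, so Corollary~\ref{whenwTk} gives $w(T)=k$ in one step, with no case analysis. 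For $k=3$ you do not even need this machinery: any tournament with $\gamma=2$ (e.g.\ the directed $3$-cycle) has $w=3$, since a two-vertex subtournament is never strong. Your closing remark that $w(T)\in\{0\}\cup\{3,4,\ldots\}$ is correct and justifies the range $k\geq 3$.
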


 In the construction of \cite{exactlyk} used in Theorem \ref{wwanyk}, it is possible to vary a parameter $m$ to alter the structure of $T$. In fact, we get the following corollary.

 \begin{corollary}
 For any integers $k\geq 3$, and $m\geq 1$ there exists a tournament $T$ such that $w(T)=k$ and $T$ has exactly $m$ watchman's walks. Moreover, there exists a tournament $T$ such that $T$ has a unique watchman's walk of length $k$.
 \end{corollary}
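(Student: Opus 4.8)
The plan is to build $T$ from the construction of \cite{exactlyk} that was used in the proof of Theorem~\ref{wwanyk}, exploiting the freedom in the parameter $m$, which governs the number of minimum dominating sets of the resulting tournament. Recall that this construction takes as input a tournament $X$ with a distinguished subset $M$ such that $X-M$ dominates $M$ (together with a suitable inheritance graph), and returns a tournament $T$ with $\gamma(T)=k$ having exactly $|M|$ minimum dominating sets. The first step is to choose $X$ and $M$, with $|M|=m$, so that \emph{every} one of the $m$ minimum dominating sets $D$ induces the same subtournament $T[D]$, namely the near-transitive tournament $U_k$ on $k$ vertices: the transitive tournament on $u_1,\dots,u_k$ (with $u_i\to u_j$ whenever $i<j$) in which the arc $(u_1,u_k)$ has been reversed to $(u_k,u_1)$. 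Concretely, one takes the common ``core'' of each minimum dominating set to be a transitive tournament on $k-1$ vertices and arranges that every vertex of $M$ plays the role of the source $u_1$ with respect to that core; the required hypothesis that $X-M$ dominates $M$ then holds automatically, since the sink of the core dominates all of $M$.

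The second step records two elementary facts about $U_k$: it is strongly connected, since it contains the Hamilton cycle $u_1\to u_2\to\cdots\to u_k\to u_1$, and this is its \emph{only} Hamilton cycle. For the latter, note that in $U_k$ the unique in-neighbour of $u_1$ is $u_k$ and the unique out-neighbour of $u_k$ is $u_1$, so every Hamilton cycle must traverse $(u_k,u_1)$; deleting this arc leaves the transitive tournament on $\{u_1,\dots,u_k\}$, whose only Hamilton path is $u_1\to u_2\to\cdots\to u_k$, so the cycle above is forced.

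The third step computes $w(T)$ and counts the walks. Since some minimum dominating set $D$ of $T$ satisfies $T[D]\cong U_k$, which is strongly connected, Corollary~\ref{whenwTk} gives $w(T)=\gamma(T)=k$. By Corollary~\ref{domsetrepeat} no watchman's walk of $T$ repeats a vertex, so a watchman's walk is a closed walk of length $k$ on $k$ distinct vertices, i.e.\ a $k$-cycle whose vertex set is a dominating set of size $k$; as $\gamma(T)=k$, that vertex set is one of the $m$ minimum dominating sets $D$, and the walk is a Hamilton cycle of $T[D]$. Conversely, every Hamilton cycle of $T[D]$ for a minimum dominating set $D$ is a closed dominating walk of length $k=w(T)$, hence a watchman's walk. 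Since the vertex set of such a walk identifies $D$, the watchman's walks of $T$ correspond bijectively to the pairs $(D,H)$ with $D$ a minimum dominating set and $H$ a Hamilton cycle of $T[D]$; by our choice of $X$ each of the $m$ sets $D$ contributes exactly one $H$, so $T$ has exactly $m$ watchman's walks. Taking $m=1$ yields the ``moreover'' statement about a unique watchman's walk of length $k$.

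The main obstacle is the first step: one must extract from the construction of \cite{exactlyk} enough control over the internal structure of the minimum dominating sets it produces to force each induced subtournament to be $U_k$, while keeping the construction's hypotheses intact (that $X-M$ dominates $M$, and that the chosen inheritance graph forces $\gamma(T)=k$) and without introducing any additional minimum dominating sets, which would create additional watchman's walks. Everything after that is a routine combination of Corollaries~\ref{domsetrepeat} and~\ref{whenwTk} with the unique-Hamilton-cycle property of $U_k$.
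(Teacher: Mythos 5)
The paper offers no proof of this corollary at all: it is asserted on the strength of the one-sentence remark that the parameter $m$ in the construction of \cite{exactlyk} can be varied to alter the structure of the tournament produced in Theorem~\ref{wwanyk}, so there is no argument of the paper's to compare yours against line by line. Your proposal is a faithful elaboration of that same intended route, and the half of it that you actually work out is correct and is genuinely more than the paper provides. Specifically, the reduction of the count is sound: once some minimum dominating set induces a strong subtournament, Corollary~\ref{whenwTk} gives $w(T)=\gamma(T)=k$; Corollary~\ref{domsetrepeat} then forces every watchman's walk to be a $k$-cycle on $k$ distinct vertices whose vertex set is a dominating set of size $\gamma(T)$, hence one of the minimum dominating sets; and conversely every Hamilton cycle of $T[D]$ for minimum $D$ is a closed dominating walk of length $w(T)$. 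Your verification that the near-transitive tournament $U_k$ is strong and has a unique Hamilton cycle is also correct, so the number of watchman's walks equals the number of pairs $(D,H)$ exactly as you claim, and $m=1$ gives the ``moreover'' clause.

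The gap is the one you flag yourself, and it is genuine: nothing in your write-up verifies that the construction of \cite{exactlyk} actually accepts your choice of $X$ and $M$ and returns a tournament whose minimum dominating sets are \emph{precisely} the $m$ sets $C\cup\{v\}$, $v\in M$, each inducing $U_k$ inside $T$ and each dominating $T$, with no further dominating sets of size $k$. Your proposed input (a transitive $(k-1)$-vertex core $C$ with every vertex of $M$ beating all of $C$ except its sink) is consistent with the paper's own worked example --- for $k=4$, $m=1$ the authors deliberately take $X$ to be a $3$-cycle plus a sink, i.e.\ a non-strong induced subtournament, which is exactly how they force $w(T)=\gamma(T)+1$ in their $\gamma_{\text{t}}$ example --- but consistency is not a proof, and until the relevant structural guarantees are extracted from \cite{exactlyk}, your first step is an assumption rather than a lemma. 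Since the paper leaves precisely the same step unproved, your proposal should be read as matching the paper's intended approach while making explicit, and partially discharging, the obligations the paper glosses over.
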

 
  To construct the tournament with $w(T)=5$, and $\gamma_{\text{t}}(T)=\gamma(T)=4$,  we the construction given in \cite{exactlyk}. Using $k=4$ and $m=1$, where in the construction the tournament $X$, and hence the unique $\gamma$ set, is taken to be the tournament formed from adding a sink to a three-cycle and the ingredient tournament is the quadratic residue tournament on $67$ vertices. This guarantees $w(T)=5$ and $\gamma_{\text{t}}(T)=4$.

\subsection{Computational results}

Even when considering only those of small order, the number of non-isomorphic tournaments on a given number of vertices can be very large. As a result, the length, structure, and multiplicity of watchman's walks can vary greatly, even between tournaments on the same number of vertices. For further results on multiplicity of watchman's walks of undirected graphs, see \cite{multiplicity}. In this section, we present and summarize computer compiled data on the watchman's walks and domination number in tournaments of order up to 10. The table in Appendix A presents a summarized collection of this data. In this table, we specify the order, length of watchman's walks, and domination number, and give the number of tournaments with the stated parameters. This table contains computational data regarding the watchman's walks in all tournaments of order at most $10$, using the collection of adjacency matrices given in \cite{digraphs}.

If we consider the table in Appendix A, we can easily identify some interesting tournaments. There is exactly one tournament of order $7$ with domination number $3$. This is the Paley tournament of order $7$. Let $q=3(\hspace{-0.2cm}\mod 4)$ be a prime power. Consider the finite field of order $q$, $F_q$. The \textit{Paley tournament} is the digraph with vertex set $V = F_q$, where $(a,b)$ is an arc if $b-a \in (F_q)^2$.

There is also a unique tournament $T$ on seven vertices with domination number two such that $T$ has fourteen watchman's walks. This graph is illustrated in Figure \ref{unique14}.

We let $T(n)$ be the set of non-isomorphic tournaments on $n$ vertices. Consider the set of tournaments, $T(n)$. If we add a source vertex to each tournament in this set, we get $T(n)$ non-isomorphic tournaments of order $n+1$, each having a domination number of $1$ and a watchman's walk of length $0$. The total number of tournaments of order $n$ is equal to the number of tournaments of order $n+1$ that have a dominating vertex. Similarly, we can consider a tournament on $n-1$ vertices with domination number $\gamma$, watchman number $w$, and watchman multiplicity $m$, we can add a sink vertex to the tournament to get a new tournament on $n$ vertices with the same parameters. It follows that there are $|T(n)|$ many tournaments on $n+1$ vertices with a watchman number of $0$. If we instead add a sink vertex to a tournament on $n$ vertices, we get a tournament on $n+1$ vertices with the same watchman number, domination number, and watchman multiplicity. So, the number of tournaments with order $n$, watchman number $w$, multiplicity $m$, and domination number $\gamma$ is less than or equal to the number of  tournaments with order $n+1$ with watchman number $w$, multiplicity $m$, and domination number $\gamma$.

\section{Families of tournaments}

As noted in the previously, there is a large number of non-isomorphic tournaments on a given number of vertices, and the Table in Appendix A highlights that these tournaments are structurally very different, and their domination or watchman numbers can vary greatly. For many tournaments, however, we may expect their watchman numbers to be low. For many families, we can prove this to be the case. In this section, we consider families of tournaments or tournaments having certain characteristics, in order to offer more precise results on their watchman number.

\subsection{Simple tournaments}
The \textit{score sequence} of a tournament is the sequence of the vertex out-degrees, typically listed in non-decreasing order. A tournament with score sequence $S$ is said to be \textit{simple} if there are no other  non-isomorphic tournaments with score sequence $S$. That is, $T$ is the unique tournament with score sequence $S$. The following theorems demonstrate that any simple tournament has a small watchman number.

\begin{theorem}{\em \cite{simple}} \label{scoreunique}
A score sequence $S$ is the score sequence of exactly one tournament $T$ if and only if every strong component has score sequence $(0), (1,1,1),$ $(1,1,2,2)$, or $(2,2,2,2,2)$.
\end{theorem}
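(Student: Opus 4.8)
The plan is to prove the ``if and only if'' in two stages: first reduce the question to strongly connected tournaments, then classify the breakpoint-free score sequences that are uniquely realizable.

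\textbf{Reduction to strong components.} Call an index $k$ with $1\le k<n$ a \emph{breakpoint} of a non-decreasing score sequence $s_1\le\cdots\le s_n$ if $\sum_{i=1}^{k}s_i=\binom{k}{2}$. The first point I would establish is that the breakpoints depend only on $S$, not on the realization: if $k$ is a breakpoint and $T$ is any tournament with score sequence $S$, then the $k$ lowest-scoring vertices of $T$ have total out-degree $\binom{k}{2}$, which is exactly the number of arcs among them, so none of them beats any of the remaining $n-k$ vertices, i.e. every arc between the two parts points ``downward.'' A short computation using $\binom{j}{2}=\binom{k_1}{2}+\binom{j-k_1}{2}+k_1(j-k_1)$ then shows that the block of $S$ strictly between two consecutive breakpoints $k_1<k_2$ has no internal breakpoint, hence (by Landau's characterization of strong score sequences) is realized only by strongly connected tournaments, so it lies inside a single strong component of $T$; and no strong component can straddle a breakpoint. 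Consequently every realization of $S$ has the same list of strong-component score sequences, linearly ordered as in the condensation, which is a transitive tournament by Theorem~\ref{condtransitive} and hence rigid. Since all inter-component arcs are forced, an isomorphism between two realizations must fix each component setwise, so the number of isomorphism classes realizing $S$ is the product, over the blocks, of the number of classes realizing each block. Hence $S$ is uniquely realizable if and only if each block — a breakpoint-free score sequence, equivalently one all of whose realizations are strong — has a unique realization.

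\textbf{The strong case.} It then suffices to prove that a breakpoint-free score sequence is uniquely realizable exactly when it equals $(0)$, $(1,1,1)$, $(1,1,2,2)$, or $(2,2,2,2,2)$. Here I would invoke the classical interchange theorem that any two tournaments with the same score sequence are joined by a sequence of \emph{triangle reversals} (reverse all three arcs of a directed $3$-cycle; this preserves every out-degree). So $S$ is uniquely realizable precisely when, in a realization $T$, reversing every directed triangle returns a tournament isomorphic to $T$; equivalently, $S$ fails to be simple as soon as some realization admits one triangle reversal that changes its isomorphism type. For $n\le 5$ this is a finite check: Landau's inequalities leave only $(0)$ and $(1,1,1)$ for $n\le 3$, only $(1,1,2,2)$ for $n=4$, and only $(2,2,2,2,2)$, $(1,2,2,2,3)$, $(1,1,2,3,3)$ for $n=5$; the first four are realized by the empty tournament, the $3$-cycle, the unique strong $4$-tournament, and the unique regular $5$-tournament respectively, while for $(1,2,2,2,3)$ and $(1,1,2,3,3)$ one exhibits two non-isomorphic strong realizations, so these are not simple.

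\textbf{Main obstacle.} The crux is the range $n\ge 6$, where I must show that \emph{every} breakpoint-free score sequence on at least six vertices has at least two non-isomorphic realizations. The approach is to take an arbitrary strong realization $T$ and locate a directed triangle $x\to y\to z\to x$ together with a ``witness'' vertex $w$ whose arcs to $\{x,y,z\}$ distinguish the three vertices (say $w$ beats exactly one of them), so that reversing the triangle alters a genuine isomorphism invariant — for instance the out-neighbourhood, or the multiset of scores inside the out-neighbourhood, of a vertex of maximum out-degree. Guaranteeing that such a configuration exists for every strong tournament on $\ge 6$ vertices is the delicate part; the cleanest way to organize it is via the contrapositive — if no triangle reversal of $T$ changes its isomorphism type, then $T$ is forced to be so homogeneous (every directed triangle indistinguishable from the outside) that a short counting argument bounds $n$ by $5$. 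This is the step where I expect the real work to lie, and where I would follow the explicit realization-pair constructions of \cite{simple}.
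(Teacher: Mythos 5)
First, note that the paper does not prove this statement at all: Theorem~\ref{scoreunique} is quoted with a citation to \cite{simple}, so there is no in-paper argument to compare yours against. Judged on its own terms, your reduction to strong components is sound and standard: breakpoints of $S$ are realization-independent, they cut every realization into the same ordered list of strong blocks with all inter-block arcs forced, and the rigidity of the transitive condensation (Theorem~\ref{condtransitive}) gives the product formula for the number of isomorphism classes, so simplicity of $S$ is equivalent to unique realizability of each strong block. The finite check for $n\le 5$ is also fine in principle, though you should actually exhibit the two non-isomorphic strong realizations of $(1,2,2,2,3)$ and of $(1,1,2,3,3)$ rather than assert them.

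The genuine gap is the entire ``only if'' direction for $n\ge 6$: the claim that no strong score sequence on six or more vertices is uniquely realizable. This is the real mathematical content of the theorem, and your proposal does not prove it --- it gestures at finding a triangle reversal together with a ``witness'' vertex, concedes that guaranteeing such a configuration exists ``is the delicate part,'' and then defers to ``the explicit realization-pair constructions of \cite{simple},'' i.e., to the very source whose theorem you are meant to be proving. The contrapositive you propose (``if no triangle reversal of $T$ changes its isomorphism type, then a short counting argument bounds $n$ by $5$'') is not an available lemma; it is essentially equivalent to the statement to be established, and the counting argument is never given. The suggested invariants (out-neighbourhoods, or score multisets inside the out-neighbourhood of a vertex of maximum out-degree) can fail to distinguish a reversal from the identity in highly symmetric tournaments such as regular and doubly regular ones, so even the shape of that argument needs repair; the reversal-connectivity framework only tells you that \emph{some} reversal along a chain between two non-isomorphic realizations changes the type, which presupposes you already have two non-isomorphic realizations. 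Until the $n\ge 6$ case is actually carried out --- for example by constructing, for every strong score sequence on at least six vertices, an explicit pair of non-isomorphic realizations --- the proof is incomplete.
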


\begin{theorem}
If a score sequence $S$ is the score sequence of a unique tournament $T$, then $w(T)=0$ or $3$.
\end{theorem}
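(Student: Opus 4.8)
The plan is to combine Theorem~\ref{scoreunique} with Theorem~\ref{dominationcond} (or its Corollary~\ref{condww}). By Theorem~\ref{dominationcond}, the watchman number of a tournament equals the watchman number of its dominating strong component, so it suffices to compute $w$ on each of the possible strong components allowed by Theorem~\ref{scoreunique}. A simple tournament has every strong component with score sequence $(0)$, $(1,1,1)$, $(1,1,2,2)$, or $(2,2,2,2,2)$, so the dominating strong component $T'$ is one of the (unique) tournaments realizing one of these four sequences.

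First I would dispose of the trivial case: if $T'$ has score sequence $(0)$, then $T'$ is a single vertex that dominates all of $T$, so the watchman's walk has length $0$. In the three remaining cases $T'$ is strongly connected of order $3$, $4$, or $5$. For each, I would identify the unique tournament with that score sequence and exhibit a short closed dominating walk, then argue it is minimum. For $(1,1,1)$, $T'$ is the directed $3$-cycle, which is itself a Hamilton cycle, so $w(T')=3$. For $(1,1,2,2)$ and $(2,2,2,2,2)$ (the latter being the quadratic residue / Paley tournament on $5$ vertices, the unique such strong tournament), I would use Theorem~\ref{domset}: since these are strong tournaments on at least $3$ vertices with no dominating vertex, $\gamma(T')\geq 2$, and by Theorem~\ref{domset} $w(T')\in\{\gamma(T'),\gamma(T')+1\}$. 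A direct check gives $\gamma(T')=2$ in both cases, and since neither has a dominating vertex one verifies that no $2$-set induces a directed $2$-cycle (there are no digons in a tournament), so by Corollary~\ref{whenwTk} we cannot have $w(T')=2$; hence $w(T')=3$. Alternatively, and perhaps more cleanly, I would just exhibit an explicit directed $3$-cycle through a dominating pair-plus-one vertex in each of these two small tournaments and note that $w(T')\geq \gamma(T')=2$ but no closed walk of length $2$ exists in a tournament, forcing $w(T')=3$.

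Collecting the cases: $w(T')=0$ when the dominating strong component is trivial, and $w(T')=3$ in every other case. Since $w(T)=w(T')$ by Theorem~\ref{dominationcond}, this gives $w(T)\in\{0,3\}$, as claimed.

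The main obstacle I anticipate is not conceptual but the bookkeeping of the small cases: one must be sure that the score sequences $(1,1,2,2)$ and $(2,2,2,2,2)$ each really do correspond to a single, strongly connected tournament (this is exactly the content of Theorem~\ref{scoreunique}, which guarantees uniqueness, together with the fact that these sequences are themselves strong score sequences), and one must correctly compute their domination numbers as $2$. The only subtlety is ruling out $w(T')=2$; this is immediate because a tournament has no directed $2$-cycle, so any closed walk has length at least $3$ unless it is trivial — hence the dichotomy $0$ or $3$ with no value of $2$ possible. I would phrase this last point as a standing remark so it covers all three nontrivial cases uniformly.
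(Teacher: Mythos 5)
Your proposal is correct and follows essentially the same route as the paper's proof: reduce to the dominating strong component via Theorem~\ref{dominationcond}, invoke Theorem~\ref{scoreunique} to limit its score sequence to $(0)$, $(1,1,1)$, $(1,1,2,2)$, or $(2,2,2,2,2)$, and check that the first case gives $w=0$ while the others give $w=3$. Your additional care in ruling out $w(T')=2$ (no digons in a tournament) and in citing Theorem~\ref{domset} only makes explicit what the paper leaves as an assertion.
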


\begin{proof}
Let $T$ be a tournament with score sequence $S$ such that for any other tournament $T_2$ with score sequence $S$, $T \simeq T_2$. By Theorem \ref{scoreunique}, the dominating strong component of $T$ has score sequence $(0)$, $(1,1,1)$, $(1,1,2,2)$, or $(2,2,2,2,2)$. 
It is clear that the tournament defined by $(0)$ has a dominating vertex, and has a watchman's walk of length $0$. None of the score sequences $(1,1,1)$, $(1,1,2,2)$, or $(2,2,2,2,2)$ define a subtournament with a dominating vertex, and each define a subtournament with a dominating walk of length $3$. Hence $T'$ has a watchman's walk of length $3$ and, by Theorem~\ref{dominationcond}, $w(T)=3$.
\end{proof}

\subsection{Transitive tournaments}

Recall that a tournament $T=(V,A)$ is transitive if there exists an ordering of the vertices of $T$, $(v_1,v_2,\ldots,v_n)$ such that $(v_i,v_j)\in A(T)$ if and only if $i <j$. These tournaments are always acyclic. We can also have tournaments whose structure closely resembles that of a transitive tournament, but do not have such an ordering. Three such families of tournaments are locally transitive, locally-in-transitive, and locally-out-transitive tournaments. A tournament $T$ is \textit {locally transitive} (\textit{locally-in-transitive}, \textit{locally-out-transitive}) if for each vertex $v$ in $T$, the subdigraph induced by in- and out-neighbourhoods (in-neighbourhood, out-neighbourhood) of $v$ is a transitive sub-tournament. 

Recall that any transitive tournament has a dominating vertex. This is not necessarily the case for tournaments that are locally-in-transitive or locally-out-transitive. However, due to their ordering, or similar structure to transitive tournaments, we would expect these tournaments to have small dominating sets, and hence, small watchman's walks. In this section, we show this is the case.

\begin{theorem} \label{loctransleq}
If $T$ is a locally-in-transitive or locally-out-transitive tournament, then $\gamma(T)\leq 3$ and $w(T)\leq 3$.
\end{theorem}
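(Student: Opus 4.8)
The plan is to handle the two hypotheses by separate arguments, since a directed triangle behaves very differently under the two local-transitivity conditions, and in each case to produce an explicit small dominating set (and, where needed, an explicit short closed dominating walk).

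For a locally-out-transitive tournament $T$, the first thing I would record is a reformulation of the hypothesis: since a subtournament is transitive exactly when it contains no directed $3$-cycle, $T$ is locally-out-transitive if and only if no vertex of $T$ has all three vertices of some directed $3$-cycle in its out-neighbourhood. With that in hand the proof is short. If $T$ contains no directed $3$-cycle then $T$ is transitive, so it has a dominating vertex and $\gamma(T)=1$, $w(T)=0$. Otherwise fix a directed $3$-cycle $a\to b\to c\to a$; for every vertex $v\notin\{a,b,c\}$ the reformulation says $\{a,b,c\}\not\subseteq N^+(v)$, so at least one of $a,b,c$ beats $v$. As $a,b,c$ also dominate one another, $\{a,b,c\}$ is a dominating set, so $\gamma(T)\le 3$, and $W=a,b,c,a$ is a closed dominating walk of length $3$, so $w(T)\le 3$.

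For a locally-in-transitive tournament $T$ I would argue directly and actually get a stronger bound on $\gamma$. Pick any vertex $v$. If $v$ is a source it dominates $T$, so $\gamma(T)=1$ and $w(T)=0$. Otherwise $T[N^-(v)]$ is a nonempty transitive tournament, hence has a source vertex $x$ that beats every other vertex of $N^-(v)$; since $x\to v$ and $v$ beats all of $N^+(v)$, the set $\{v,x\}$ dominates $T$, so $\gamma(T)\le 2$. Then $w(T)=0$ if $\gamma(T)=1$, while if $\gamma(T)=2$ Theorem~\ref{domset} gives $w(T)\le\gamma(T)+1=3$; in all cases $w(T)\le 3$.

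The only step with any real content is the reformulation used in the locally-out-transitive case — noticing that forbidding a $3$-cycle inside every out-neighbourhood is the same as forcing every directed triangle to be a dominating set. After that both parts are essentially immediate; in particular no analysis of the strongly connected components or of the condensation is needed, since the triangle in the first case and the two-vertex set in the second are found by purely local considerations valid in any tournament of the relevant type.
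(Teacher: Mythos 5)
Your proof is correct, and it takes a genuinely different and considerably more elementary route than the paper's. The paper argues by contradiction: it assumes $\gamma(T)\geq 4$, takes a watchman's walk (of length $\gamma$ or $\gamma+1$ by Theorem~\ref{domset}), assigns private out-neighbours to the vertices of the underlying dominating set, and manufactures a directed $3$-cycle inside some in- or out-neighbourhood to violate local transitivity; this forces a lengthy case analysis over whether $w(T)=\gamma$ or $\gamma+1$ and over the structure of the walk's vertex set. You instead exhibit the small dominating structures directly. Your key observation in the out-transitive case --- that local out-transitivity is precisely the statement that every directed triangle is a dominating set --- immediately yields a closed dominating walk of length $3$ (or a dominating vertex if $T$ is transitive), with no appeal to Theorem~\ref{domset} at all. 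In the in-transitive case, taking a non-source vertex $v$ together with the source $x$ of the transitive tournament $T[N^-(v)]$ gives the dominating set $\{v,x\}$, and then Theorem~\ref{domset} closes the gap to $w(T)\leq 3$. Your argument is not only shorter but proves strictly more: it shows $\gamma(T)\leq 2$ for every locally-in-transitive tournament (the paper only claims $\gamma(T)\leq 3$; note this is consistent with the paper's Figure~\ref{unique14}, a locally transitive tournament with $\gamma=2$ and $w=3$), and it identifies every directed triangle of a locally-out-transitive tournament as a dominating cycle. The standard facts you invoke --- a tournament is transitive iff it contains no directed $3$-cycle, and a nonempty transitive tournament has a source beating all its other vertices --- are both correct, and the edge cases (source vertices, transitive $T$) are handled. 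The only thing the paper's approach arguably buys is structural information about how watchman's walks interact with private neighbours, but as a proof of the stated bounds yours is cleaner and stronger.
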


\begin{proof}
Let $T$ be a tournament, and let $\gamma = \gamma(T)$. By Theorem \ref{domset}, $w(T)=\gamma$ or $\gamma+1$. Suppose to the contrary that $\gamma\geq 4$, and $w(T)=\gamma$. Let $W=v_1,v_2,\ldots,$ $ v_{\gamma}, v_1$ be a watchman's walk in $T$, and let $D$ be the set of vertices in $W$. Since $w(T)=\gamma(T)$, $D$ is a minimum dominating set of $T$. Each vertex in $D$ is dominated by at least one other vertex in $D$, namely the vertex that precedes it in the closed walk $W$. So, each vertex in $D$ has at least one private out-neighbour in $V(T \backslash D)$ with respect to the dominating set. Let $p_i$ be a private neighbour of $v_i$ for each $1 \leq i \leq \gamma$. It follows that $v_1,p_1$, and $p_{\gamma -1}$ are all in-neighbours of $v_2$. So, if $T$ is a locally-in-transitive tournament, $T\lbrack \{v_1,p_1,p_{\gamma -1}\} \rbrack $ is a transitive subtournament of $T$. Since $(p_{\gamma -1},v_1)$ and $(v_1,p_1)$ are both arcs, $(p_{\gamma -1},p_1)$ must be an arc in $T$. Now, $v_{\gamma -1}, p_{\gamma -1}$, and $p_1$ are all in-neighbours of $v_{\gamma}$, but $v_{\gamma -1},p_{\gamma -1}, p_1, v_{\gamma -1}$  is a cycle in the in-neighbourhood of $v_{\gamma}$. However, this cannot happen since $T$ is locally-in-transitive. Thus, if $T$ is locally-in-transitive, we cannot have that $\gamma\geq 4$ when $w(T)=\gamma$. Similarly, $v_2,\ldots ,v_{\gamma}$, are all out-neighbours of $p_1$. So, if $T$ is locally-out-transitive, $T\lbrack \{v_2,\ldots ,v_{\gamma}\} \rbrack $ is a transitive subtournament of $T$. Since $v_2,v_3,\ldots,v_{\gamma }$ is a path in this subtournament, $(v_2,v_{\gamma })$ must be an arc. Now, $v_1,v_2$ and $v_{\gamma }$ are all out-neighbours of $p_2$. However, since $(v_2,v_{\gamma })$ and $(v_{\gamma },v_1)$ are both arcs, we have that $v_{\gamma },v_1, v_2, v_{\gamma }$ is a cycle. However, this cannot happen since $T$ is locally-out-transitive. Thus, we have a contradiction and if $T$ is locally-out-transitive then $\gamma(T)\geq 4$ when $w(T)=\gamma(T)$.

Now suppose $\gamma\geq 4$ and $w(T)=\gamma+1$ in $T$. Let $W$ be a watchman's walk in $T$, and let $D$ denote the set of vertices in $W$. Since $W$ is a dominating walk, $D$ is either a minimal dominating set of size $\gamma +1$, or $D$ contains as a proper subset a minimum dominating set.

First consider the case where $D$ is a minimal dominating set. Similar to the case when $w(T)=\gamma$, each vertex in $D$ has at least one private out-neighbour in $V(T \backslash D)$ with respect to the dominating set. Let $p_i$ be a private neighbour of $v_i$ for each $1 \leq i \leq \gamma +1$. Also, $v_2,\ldots ,v_{\gamma}$ are all out-neighbours of $p_1$. So, if $T$ is locally-out-transitive, $T\lbrack \{v_2,\ldots ,v_{\gamma}\} \rbrack $ is a transitive subtournament of $T$. Since $v_2,v_3,\ldots,v_{\gamma}$ is a path in this acyclic subtournament, $(v_2,v_{\gamma})$ must be an arc. Now, $v_1,v_2$ and $v_{\gamma}$ are all out-neighbours of $p_2$. However, we have that $v_{\gamma},v_1, v_2, v_{\gamma}$ is a cycle. This is a contradiction. Similarly, $v_1,p_1$, and $p_{\gamma}$ are all in-neighbours of $v_2$. So, if $T$ is a locally-in-transitive tournament, $T\lbrack \{v_1,p_1,p_{\gamma}\} \rbrack $ is a transitive subtournament of $T$. Since $(p_{\gamma},v_1)$ and $(v_1,p_1)$ are both arcs, $(p_{\gamma -1},p_1)$ must be an arc in $T$. Now, $v_{\gamma-1}, p_{\gamma-1}$, and $p_1$ are all in-neighbours of $v_{\gamma}$, but $v_{\gamma-1},p_{\gamma -1}, p_1, v_{\gamma -1}$  is a cycle in the in-neighbourhood of $v_{\gamma}$. This is a contradiction, since $T$ is locally-in-transitive.

Now suppose that $D$ contains as a subset a minimum dominating set $D'$. Let $H=v_1,v_2,\ldots,v_{\gamma -1},v_{\gamma}$ be a Hamilton path in $T[D']$; note that $T[D']$ does not have a Hamilton cycle so $(v_1, v_{\gamma})$ is an arc. 
Each vertex $v_i \in D\setminus \{v_1\}$ is dominated by $v_{i-1}$, so each vertex in $D'\setminus \{v_1\}$ has at least one private out-neighbour when consider $D$. If $p_{\gamma}$ is a private out-neighbour of $v_{\gamma}$, then $p_{\gamma}$ dominates all vertices in $D'$ except for $v_{\gamma}$. In particular, it dominates $v_1$ and $v_2$. If $v_1$ does not have a private out-neighbour, then $\{v_2,v_3,\ldots, v_{\gamma}, p_{\gamma},v_2\}$ is a closed dominating walk of length $\gamma$. This is a contradiction, as $w(T)=\gamma + 1$. So $v_1$ must have at least one private out-neighbour. Let $p_i$ be a private out-neighbour of $v_i$ for each $1 \leq i \leq \gamma-1$. Vertices $p_{\gamma-1}, v_1,$ and $v_{\gamma-1}$ are all in-neighbours of $v_{\gamma}$, since $(v_1,v_{\gamma})$ is an arc. If $T$ is locally-out-transitive, We know $T\lbrack \{{v_\gamma-1},p_{\gamma-1},v_1\} \rbrack$ is a transitive subtournament of $T$, and $(v_{\gamma-1},p_{\gamma-1})$ and $(p_{\gamma-1},v_1)$ are both arcs, so $(v_{\gamma-1},v_1)$ is an arc. However, $p_1$ is also an in-neighbour of $v_{\gamma}$, so $T\lbrack \{v_{\gamma-1},p_{1},v_1\} \rbrack$ is also a transitive subtournament, and $v_{\gamma-1},p_{1},v_1,v_{\gamma-1}$ is a cycle. So, if $T$ is locally-in-transitive and $\gamma(T)\geq 4$, $w(T)\not=\gamma(T)+1$.

Similarly, $p_{i}$ dominates $v_{j}$ for all $v_j\in D\setminus{v_i}$. Thus, we can see that $T\lbrack \{v_2,\ldots,v_{\gamma }\rbrack$ is a transitive subtournament if $T$ is locally\hyp{}out\hyp{}transitive, since $v_2,$ $\ldots,$ $v_{\gamma }$ are all out-neighbours of $p_1$. So, we know that $(v_i,v_j)$ is an arc for all $1<i<j\leq \gamma $. By considering the out-neighbours of $p_{\gamma }$, which include $v_1,\ldots, v_{\gamma (T)-1}$, it is clear that $(v_1,v_i)$ is also an arc for $2\leq i \leq \gamma (T) -1$. If $(p_2,p_3)$ is an arc, then $v_1,p_3$, and $v_3$ are out-neighbours of $p_2$, then $v_1,v_3,p_3,v_1$ is a cycle in the out-neighbourhood of $p_2$. However, this out-neighbourhood induces a transitive subtournament. Thus, $(p_3,p_2)$ must be an arc. However, $p_2$, $v_2$, and $v_1$ would be out-neighbours of $p_3$, and $v_1,v_2,p_2,v_1$ is a cycle. So, $(p_3,p_2)$ also cannot be an arc if $T$ is locally\hyp{}out\hyp{}transitive. This is a contradiction, since $T$ is a tournament. So, we cannot have that $\gamma(T)\geq 4$ when $w(T)=\gamma(T)+1$ if $T$ is locally-out-transitive.
\end{proof}

 The tournament in Figure \ref{unique14} is locally\hyp{}transitive. However, it is not a transitive tournament as it contains $3$-cycles. Since each locally\hyp{}transitive tournament is both locally-in-transitive and locally\hyp{}out\hyp{}transitive, we get the following corollary.

\begin{corollary}
If $T$ is a locally-transitive tournament, then $\gamma(T) \leq 3$, and $w(T)=0$ or $3$.
\end{corollary}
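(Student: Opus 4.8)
The plan is to derive this immediately from Theorem~\ref{loctransleq} together with the elementary observation that tournaments admit no short closed walks. First I would note that every locally-transitive tournament is, by definition, both locally-in-transitive and locally-out-transitive, since the out-neighbourhood and the in-neighbourhood of each vertex are each subdigraphs of the full neighbourhood. Hence Theorem~\ref{loctransleq} applies directly and gives $\gamma(T)\leq 3$ and $w(T)\leq 3$.

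It then remains only to rule out $w(T)=1$ and $w(T)=2$. A closed walk of length $1$ would require a loop, and a closed walk of length $2$ would require two vertices $u,v$ with both $(u,v)$ and $(v,u)$ arcs; neither can occur in a tournament, where there is exactly one arc between each pair of distinct vertices and no loops. Therefore any closed dominating walk in $T$ has length $0$ (when some vertex dominates all of $T$, i.e. $\gamma(T)=1$) or length at least $3$. Combined with the bound $w(T)\leq 3$, this forces $w(T)\in\{0,3\}$.

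There is essentially no obstacle here: the statement is a packaging of Theorem~\ref{loctransleq} with the trivial non-existence of digons and loops in tournaments. The only point worth stating carefully is the reduction in the first paragraph, namely that $N^+(v)$ and $N^-(v)$ induce transitive subtournaments whenever $N^+(v)\cup N^-(v)$ does, so that the hypotheses of Theorem~\ref{loctransleq} are met. Everything else is immediate, and the tournament in Figure~\ref{unique14} already exhibits a locally-transitive (but non-transitive) example with $w(T)=3$, showing the value $3$ is attained.
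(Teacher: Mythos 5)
Your proof is correct and follows the same route the paper intends: the corollary is obtained by noting that a locally-transitive tournament is both locally-in-transitive and locally-out-transitive (restricting a transitive subtournament to $N^+(v)$ or $N^-(v)$ preserves transitivity), applying Theorem~\ref{loctransleq} to get $\gamma(T)\leq 3$ and $w(T)\leq 3$, and then excluding $w(T)\in\{1,2\}$ because tournaments contain no loops or digons. Your write-up is in fact slightly more careful than the paper, which states the corollary without spelling out the exclusion of lengths $1$ and $2$.
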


\section{Conclusion}

There are many further directions that this research could take in the future. In particular, variations of the watchman's walk problem on directed graphs, such as allowing multiple watchmen, could be considered. The most efficient route for a given number of guards is also not currently known in directed graphs. Additionally, for families of directed graphs, the minimum number of guards needed to achieve a given specified maximum unseen time is also not currently known.

There are many other digraphs that lend themselves to the watchman's walk problem. In particular, de Bruijn graphs \cite{tournamentnote} and other digraphs with Hamilton cycles, as they are guaranteed to have a watchman's walk.

\section{Acknowledgements}

Dr. Danny Dyer acknowledges the support of NSERC.

\bibliographystyle{abbrv}
\bibliography{bib}

\newpage
\section*{Appendix A}

The following table specifies the number of tournaments ($N$) with a given watchman’s walk number ($w$), domination number ($\gamma$), and multiplicity of watchman’s walks ($m$) for each order ($n$) up to $10$.

\begin{center}
\small
\setlength{\tabcolsep}{4pt}
\begin{tabular}{|C|C|C|M|}
	\hline
	n & w & \gamma & (m,N)  \\
	\hline
	2 & 0 & 1 & (1,1) \\
	\hline
	\multirow{2}{*}{3} & 0 & 1 & (1,1) \\
	\cline{2-4} & 3 & 2 & (1,1) \\
	\hline
	\multirow{2}{*}{4} & 0 & 1 & (1,2) \\
	\cline{2-4} & 3 & 2 & (1,1),(2,1) \\
	\hline
	\multirow{2}{*}{5} & 0 & 1 & (1,4) \\
	\cline{2-4} & 3 & 2 & (1,1),(2,2),(3,3),(4,1),(5,1)\\
	\hline
	\multirow{2}{*}{6} & 0 & 1 &  (1,12) \\
	\cline{2-4}& 3 & 2 & (1,2),(2,4),(3,10),(4,12),(5,6),(6,8),(8,2)  \\
	\hline
	\multirow{4}{*}{7}& 0 & 1 & (1,56)\\
	\cline{2-4}&\multirow{2}{*}{3} &\multirow{2}{*}{2} & (1,4),
	(2,12),
	(3,38),
	(4,74),
	(5,69),
	(6,63),
	(8,40),
	(7,53),
	(9,26),\\
	&&&
	(11,4),
	(10,11),
	(12,3),
	(13,1),
	(14,1)\\
	\cline{2-4}&3 & 3 & (7,1) \\
	\hline
	\multirow{5}{*}{8}& 0 & 1 &(1,456)\\
	\cline{2-4}&\multirow{3}{*}{3} &\multirow{3}{*}{2} &(1,12),
	(2,48),
	(3,208),
	(4,544),
	(5,770),
	(6,820),
	(7,788),
	(8,892),\\
	&&&
	(9,704),
	(10,657),
	(11,387),
	(12,294),
	(13,114),
	(14,99),
	(15,36),\\
	&&&
	(16,27),
	(17,8),
	(18,9),
	(20,2)\\
	\cline{2-4}&3 & 3 &
	(7,2),
	(8,1),
	(9,1),
	(10,1)\\
	\hline
	\multirow{8}{*}{9}& 0 & 1 &(1,6880)\\
	\cline{2-4}&\multirow{5}{*}{3} &\multirow{5}{*}{2} &
	(1,56),
	(2,296),
	(3,1648),
	(4,5684),
	(5,11125),
	(6,14911),
	(8,18889),\\
	&&&
	(7,15929),
	(9,20493),
	(10,21489),
	(11,19734),
	(12,17157),
	(13,12413),\\
	&&&
	(14,8912),
	(15,6108),
	(16,3884),
	(17,2319),
	(18,1519),
	(19,801),\\
	&&&
	(20,461),
	(21,286),
	(22,147),
	(23,72),
	(24,60),
	(25,19),
	(26,4),
	(27,8),\\
	&&&
	(28,4),
	(29,1),
	(30,1)\\
	\cline{2-4}&\multirow{2}{*}{3} &\multirow{2}{*}{3} & (7,6),
	(8,19),
	(9,48),
	(10,65),
	(11,46),
	(12,22),
	(13,14),
	(14,3),
	(15,1),\\
	&&&
	(18,1),
	(27,1)\\
	\hline
	\multirow{12}{*}{10}& 0 & 1 &(1,191536)\\
	\cline{2-4}&\multirow{7}{*}{3} &\multirow{7}{*}{2} &
	(1,456),
	(2,3040),
	(3,20808),
	(4,90528),
	(5,232866),
	(6,395927),\\
	&&&
	(7,493369),
	(8,590172),
	(9,714023),
	(10,874685),
	(11,952415),\\
	&&&
	(12,1013385),
	(13,933658),
	(14,823741),
	(15,665467),
	(16,527268),\\
	&&&
	(17,377567),
	(18,277459),
	(19,184796),
	(20,126674),
	(21,78776),\\
	&&&
	(22,52721),
	(23,31016),
	(24,21213),
	(25,11643),
	(26,7727),
	(27,4137),\\
	&&&
	(28,2622),
	(29,1437),
	(30,1015),
	(31,400),
	(32,367),
	(33,112),
	(34,121),\\
	&&&
	(35,24),
	(36,47),
	(38,18),
	(40,4)\\
	\cline{2-4}&\multirow{4}{*}{3} &\multirow{4}{*}{3} &
	(5,1),
	(7,45),
	(8,360),
	(9,1603),
	(10,3933),
	(11,5672),
	(12,5752),\\
	&&&
	(13,4869),
	(14,3298),
	(15,2015),
	(16,1176),
	(17,585),
	(18,255),\\
	&&&
	(19,127),
	(20,58),
	(21,22),
	(22,19),
	(23,7),
	(24,2),
	(25,3),
	(26,3),\\
	&&&
	(27,5),
	(29,1),
	(30,2),
	(31,1),
	(36,2)\\
	\hline
\end{tabular}
\end{center}

\end{document}